\newtheorem{note}{Remark}[section]
\newtheorem{assum}{Assumption}[section]
\newtheorem{defn}{Definition}[section]
\newtheorem{lem}{Lemma}[section]
\newtheorem{thm}{Theorem}[section]
\newtheorem{pro}{Proposition}[section]
\newtheorem{cor}{Corollary}[section]
\title{Asymptotic Efficiency Analysis of the Recursive Least-Squares Algorithm for ARX Systems Without Projection}
\author{Xingrui Liu$^{*,**}$, Jieming Ke$^{*,**}$, and Yanlong Zhao$^{*,**}$ \\
\small * State Key Laboratory of Mathematical Sciences, \\
\small Academy of Mathematics and Systems Science, \\
\small Chinese Academy of Sciences, Beijing 100190, China \\ 
\small ** School of Mathematics Sciences, \\
\small University of Chinese Academy of Sciences, Beijing 100149, China  \\
\small \texttt{liuxingrui@amss.ac.cn; kejieming@amss.ac.cn; ylzhao@amss.ac.cn}
}
\date{}
\begin{document}

\maketitle

\begin{abstract}
This paper investigates the optimality analysis of the recursive least-squares (RLS) algorithm for autoregressive systems with exogenous inputs (ARX systems).
A key challenge in analyzing is managing the potential unboundedness of the parameter estimates, which may diverge to infinity.
Previous approaches addressed this issue by assuming that both the true parameter and the RLS estimates remain confined within a known compact set, thereby ensuring uniform boundedness throughout the analysis.
In contrast, we propose a new analytical framework that eliminates the need for such a boundness assumption. 
Specifically, we establish a quantitative relationship between the bounded moment conditions of quasi-stationary input/output signals and the convergence rate of the tail probability of the RLS estimation error.
Based on this technique, we prove that when system inputs/outputs have bounded twentieth-order moments, the RLS algorithm achieves asymptotic normality and the covariance matrix of the RLS algorithm converges to the Cramér-Rao lower bound (CRLB), confirming its asymptotic efficiency.
These results demonstrate that the RLS algorithm is an asymptotically optimal identification algorithm for ARX systems, even without the projection operators to ensure that parameter estimates reside within a prior known compact set.
\end{abstract}

\section{Introduction}
The RLS algorithm is one of the most established methods in adaptive control for system identification \cite{lee2018adaptive}. 
Significant research has focused on its theoretical convergence properties, which have been extensively discussed in the system identification literature \cite{XIAO20082207}.
In particular, \cite{ljung1987theory} proposed an analytical framework that the RLS algorithm for ARX systems is asymptotically normal, and the covariance matrix of the RLS algorithm converges to the CRLB as the sample size increases. 
This fits the two common senses of asymptotic efficiency:

\romannumeral1) Asymptotically efficient in the distribution sense \cite{van2000asymptotic}:  
An estimate is asymptotically efficient if its asymptotic distribution follows a normal distribution, where the variance of this normal distribution equals the limit of the CRLB. 

\romannumeral2) Asymptotically efficient in the covariance sense \cite{rao1961asymptotic}:  
An estimate is considered asymptotically efficient if its covariance matrix converges to the CRLB. 

Since the CRLB serves as a benchmark for the effectiveness of estimation processes, the fact that the RLS algorithm is asymptotically efficient demonstrates that it can be regarded as an asymptotically optimal identification algorithm \cite{friedlander1984computation}.

It is worth noticing that a key assumption of the analytical framework proposed in \cite{ljung1987theory} is that the unknown parameters reside within a prior known compact set, with parameter estimates constrained to this set through the use of a projection operation.
This assumption ensures the uniform boundedness of parameter estimates, thereby allowing the parameter estimation error to be treated as bounded in convergence analysis.

While the projection operator aids theoretical analysis, it introduces two challenges.  
First, it increases computational complexity by \( O(n^2) \) at each recursion, where \( n \) represents the dimension of the system regression vector \cite{Han2024}. 
This added overhead can be particularly burdensome in resource-constrained environments, such as embedded systems and real-time adaptive control applications \cite{6489677}.  
Second, the projection operator depends on prior knowledge of parameter bounds, which may be difficult to determine \cite{9720133}.  

Several studies have examined the convergence properties of the RLS algorithm without the projection operator, focusing on aspects such as tracking performance for time-varying parameters \cite{NAEIMISADIGH2020107482}, the conditions for global asymptotic stability \cite{BRUCE2021105005}, and stability, robustness, and excitation in scenarios where disturbances affect both the regressor and regressand variables \cite{BIN2022105144}. 
However, to date, no study has considered the asymptotic efficiency analysis of the RLS algorithm in the absence of the projection operator.

Motivated by these gaps, this paper investigates the asymptotic efficiency analysis of the RLS algorithm for ARX systems, eliminating the prior assumption that system parameters reside within a known compact set, as required in \cite{ljung1987theory}.
To replace the role of uniform boundedness imposed by this prior assumption, we transform the convergence rate of the tail probability of the RLS estimation error into studying the convergence rate for the tail probability of the inverse covariance matrix associated with regression vectors.
Furthermore, a combinatorial method is introduced that derives a quantitative relationship between this rate and the bounded moment orders of quasi-stationary system inputs/outputs.
These theoretical developments lead to three key findings, which extend the foundational results of the RLS algorithm in \cite{ljung1987theory}:

\romannumeral1) When system inputs/outputs have bounded eighth-order moments, RLS estimates exhibit asymptotic normality, where the variance of the asymptotic distribution matches the limit of the CRLB. 
This confirms that the RLS algorithm is asymptotically efficient for ARX systems in the distribution sense.

\romannumeral2) When system inputs/outputs have bounded twentieth-order moments, the covariance matrix of RLS estimates attains the CRLB.
This confirms that the RLS algorithm is asymptotically efficient for ARX systems in the covariance sense.

\romannumeral3) When system inputs/outputs have bounded $4\gamma$-order moments, where $\gamma$ is an arbitrary positive integer, the RLS algorithm for ARX systems achieves the $L^{\gamma/2}$ convergence with a convergence rate of $O(1/k^{\gamma/4})$.

The remainder of this paper is organized as follows. 
Section \ref{sec_a} formulates the ARX system identification problem and outlines the RLS algorithm.
Section \ref{sec_c} establishes the asymptotic efficiency of the RLS algorithm without projection.
Section \ref{sec_f} is the summary and prospect of this paper.

In this paper, $\mathbb{R}$, $\mathbb{R}^{n}$, $\mathbb{R}^{n \times n}$, and $\mathbb{Z}$ are the sets of real numbers, $n$-dimensional real vectors and $n$-order matrices, and intergers respectively. 
For a constant $x$, $\vert x \vert$ denotes its absolute value.
For a vector $a=[a_{1},a_{2},\ldots ,a_{n}] \in \mathbb{R}^{n}$, $a_{i}$ denotes its $i$-th element, $a^{T}$ denotes its transpose; $\Vert a \Vert_{2}$ denotes its Euclidean norm. 
For a matrix $A = [a_{i,j}]$, $i = 1,2,\ldots,n$, $j=1,2,\ldots,n$, $A^{T}$ denotes its transpose; $tr(A)$ denotes its trace; $A_{i,j}$ denotes its element in the $i$-th row and the $j$-th column, i.e, $A_{i,j} = a_{i,j}$; $\lambda_{\max}(A)$ and $\lambda_{\min}(A)$ denote its maximum eigenvalue and minimum eigenvalue respectively; $\Vert A \Vert_{1}$ denotes its $1$-norm, i.e., $\Vert A \Vert_{1} = \max_{1\leq j \leq n}\sum_{i=1}^{n} \vert a_{i,j} \vert$; $\Vert A \Vert_{2}$ denotes its $2$-norm, i.e., $\Vert A \Vert_{2} = \sqrt{ \lambda_{\max}(A^{T}A)}$. 
For a set $\Omega$, $\Omega^{\mathrm{c}}$ denotes its complement.
$\mathbb{P}$ denotes the probability operator.
$\mathbb{E}$ denotes the expectation operator.
The function $I_{\{\cdot\}}$ denotes the indicator function, whose value is $1$ if its argument (a formula) is true and $0$ otherwise.
$\mathcal{N}(\mu,\delta^2)$ denotes the Gaussian distribution with mean $\mu$ and variance $\delta^2$.

\section{Problem Formulation} \label{sec_a}

Consider an ARX system described by
\begin{align}\label{model_a}
A(z)y_{k} =  B(z)u_{k} + d_{k},\quad k \geq 1,
\end{align}
where $k$ is the time index; $u_{k} \in \mathbb{R}$ is the system input; $d_{k} \in \mathbb{R}$ is the unmeasurable noise; $A(z)$ is the $m$-th order polynomial and $B(z)$ is the $n$-th order polynomial, both expressed in terms of unit backward shift operator: $zy_{k} = y_{k-1}$ as $A(z) \triangleq 1+ a_{1}z + \ldots + a_{m}z^{m}$ and $B(z) \triangleq b_{1}z + \ldots + b_{n}z^{n}$; $a_{1} \in \mathbb{R}, \ldots, a_{m} \in \mathbb{R}$ and $b_{0},b_{1} \in \mathbb{R}, \ldots, b_{n} \in \mathbb{R}$ are the unknown parameters;  $y_{k}$ is the the system output.
The system inputs and outputs are stipulated such that $y_k = 0$ for $k \leq 0$ and $u_k = 0$ for $k < 0$.

We aim  to estimate the unknown parameter \(\theta \triangleq [a_{1}, \ldots, a_{m}, b_{1}, \ldots, b_{n}]^{T}\) based on the system inputs $u_{k}$ and system outputs $y_{k}$.
To address this difficulty, the RLS algorithm is recognized as one of the most classical algorithms in system identification \cite{ljung1987theory}, and it is presented as follows:

Beginning with initial values $\hat{\theta}_0 \in \mathbb{R}^{n}$ and a positive definite matrix $P_{0} \in \mathbb{R}^{n \times n}$, the RLS algorithm for estimating $\theta$ is recursively defined at any $k \geq 1$ as follows:
\begin{align}
\label{LS} & \hat{\theta}_{k} = \hat{\theta}_{k-1} + a_{k}P_{k-1}\phi_{k}\left(y_{k}-\phi_{k}^{T}\hat{\theta}_{k-1}\right), \\
& a_{k} = \left(1+\phi_{k}^{T}P_{k-1}\phi_{k}\right)^{-1}, \\
\label{tPPPP} & P_{k} = a_{k}P_{k-1}\phi_{k}\phi_{k}^{T}P_{k-1},
\end{align}
where $\phi_{k} \triangleq [-y_{k-1}, \ldots, -y_{k-m}, u_{k-1}, \ldots, u_{k-n}]^{T}$ is the system regression vector and $\hat{\theta}_{k}$ is the RLS estimate of the unknown parameter $\theta$.

This paper focuses on analyzing the convergence properties, particularly the asymptotic efficiency of the RLS algorithm for ARX systems.
To proceed with our analysis, we introduce several fundamental assumptions, which are primarily based on those outlined in \cite{ljung1987theory}.

\begin{defn}\label{def_a}
For a process $\{ t_{k} \}_{k=1}^{\infty}$, the operator $\bar{\mathbb{E}}$ denotes
\[
\bar{\mathbb{E}}\left[t_{k}\right] \triangleq \lim\limits_{k \rightarrow \infty} \frac{1}{k}\sum_{l=1}^{k}\mathbb{E}\left[t_{l}\right].
\]
\end{defn}

\begin{defn}\label{def_b}
A process $\{ t_{k} \}_{k=1}^{\infty}$ is said to be quasi-stationary if it is subject to 
\begin{align*}
& \mathbb{E}\left[t_{k}\right] = m_{k}, \quad \mathop{\sup}\limits_{k} \left\vert m_{k} \right\vert < C, \quad k \geq 1, \\
& \mathbb{E}\left[t_{i}t_{j}\right] = R_{i,j}^{t}, \quad \mathop{\sup}\limits_{i,j} \left\vert R_{i,j}^{t} \right\vert < C,  \quad i,j \geq 1, 
\end{align*}
and the covariance function
\[
R_{\tau}^{t} \triangleq \bar{\mathbb{E}}\left[t_{k}t_{k-\tau}\right],   \quad \tau \in \mathbb{Z},
\]
exists, where $C$ is a positive constant.
\end{defn}

\begin{assum}\label{ass_a}
The system signasl are generated by a bounded deterministic sequence $\{r_{k}\}_{k=0}^{\infty}$ and a stochastic sequence $\{e_{k}\}_{k=0}^{\infty}$ of independent random variables with zero mean values, variance $\delta_{e}^{2}$, and bounded moments of order $4\gamma$, where $\gamma$ is a positive integer, 
such that for some filters $\{ f_{i}^{(j)}(k) \}_{i=1}^{\infty}$, $j = 1,2,3,4$, $k \geq 1$,
\begin{align*}
& y_{k} = \sum_{i=1}^{\infty}  f_{i}^{(1)}(k)r_{k-i}  + \sum_{i=0}^{\infty} f_{i}^{(2)}(k)e_{k-i}, \\ 
& u_{k} = \sum_{i=0}^{\infty}  f_{i}^{(3)}(k)r_{k-i}  + \sum_{i=0}^{\infty} f_{i}^{(4)}(k)e_{k-i},
\end{align*}
where

\romannumeral1) (Uniform stability) the family of filters $\{ f_{i}^{(j)}(k) \}_{i=1}^{\infty}$, $j = 1,2,3,4$, $k \geq 1$ is uniformly stable, i.e., there exists a filter $\{ f_{i} \}_{i=1}^{\infty}$ such that
\[
\mathop{\sup}\limits_{k,j} \left\vert f_{i}^{(j)}(k) \right\vert \leq \left\vert f_{i} \right\vert, \quad i \geq 1,
\]
where $\sum_{i=1}^{\infty} \vert f_{i} \vert < \infty $;

\romannumeral2) (Joint quasi-stationary)
the system output $\{y_{k}\}_{k=1}^{\infty}$ and the system input $\{u_{k}\}_{k=0}^{\infty}$ both are quasi-stationary and, in addition, the cross-covariance function
\[
R_{\tau}^{yu} \triangleq \bar{\mathbb{E}}\left[y_{k}u_{k-\tau}\right],  \quad \tau \in \mathbb{Z},
\]
exists;

\romannumeral3) (Persistent excitation) 
the system regression vector $\{\phi_{k}\}_{k=1}^{\infty}$ satisfies the condition that 
\begin{align*}
& \bar{\mathbb{E}}[ \phi_{k}\phi_{k}^{T}]   \\
= &
\begin{bmatrix}
R_{0}^{y} & R_{1}^{y} & \cdots  & R_{m-1}^{y} &\cdots  & -R_{n-1}^{yu}\\
R_{1}^{y} &R_{0}^{y} &\cdots  & R_{m-2}^{y} & \cdots  & -R_{n-2}^{yu}\\
\vdots & \vdots & &  \vdots &  &   \vdots \\
R_{m-1}^{y} & R_{m-2}^{y} & \cdots &   -R_{m-1}^{uy} & \cdots &  -R_{0}^{uy} \\
\vdots & \vdots & &  \vdots &   &   \vdots \\
-R_{n-1}^{yu} & -R_{n-2}^{yu} &\cdots  &-R_{0}^{uy} & \cdots & R_{0}^{u}
\end{bmatrix}
\end{align*}
 is a positive definite matrix.
\end{assum}

\begin{assum}
\label{ass_b}
The filter $A(q)$ has no poles on or outside the unit circle
\end{assum}

\begin{assum}
\label{ass_c}
The system noise $\{d_{k}\}_{k=1}^{\infty}$ is a sequence of independent Gaussian random variables with zero mean and variance $\delta_{d}^{2}$.
\end{assum}

\begin{note}
Assumptions \ref{ass_a}--\ref{ass_c} are primarily derived from the conditions stated in Theorem 9.1 of \cite{ljung1987theory}, which derives the asymptotic covariance matrix of the RLS algorithm.
On the one hand, we relax the prior assumption that the system parameters belong to a known compact set. 
On the other hand, the condition of bounded $8$th-order moments for the stochastic sequence $\{e_{k}\}_{k=0}^{\infty}$ in system input/output is extended to $4\gamma$ boundedness, where $\gamma$ is a positive integer.
\end{note}

This paper aims to establish the asymptotic efficiency of the RLS algorithm for ARX systems without projection operators that ensure that parameter estimates reside within a prior known compact set under different values of \(\gamma\).

\begin{note}
The second condition essentially imposes assumptions on the bounded moments of the system input/output, as demonstrated in the following proposition:

\begin{pro}\label{pro_b2b}
Under Assumptions \ref{ass_a}, the system inputs and outputs have bounded moments of order $4\gamma$, i.e.,
\begin{align}\label{phik1}
\sup_{k}\mathbb{E}\left[\left\vert y_{k} \right\vert^{4\gamma}\right] < \infty, \quad \sup_{k}\mathbb{E}\left[\left\vert u_{k} \right\vert^{4\gamma}\right] < \infty.
\end{align}
\end{pro}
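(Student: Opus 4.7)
The plan is to apply Minkowski's inequality in $L^{4\gamma}$ to the filter representations of $y_k$ and $u_k$ provided by Assumption~\ref{ass_a}, and then reduce the task to the summability of the dominating envelope $\{|f_i|\}$.

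First, I would establish that the two series defining $y_k$ and $u_k$ make sense as $L^{4\gamma}$ limits of their partial sums. Because $\{r_k\}$ is a bounded deterministic sequence there exists $M_r > 0$ with $\|r_{k-i}\|_{4\gamma} \le M_r$, and because $\{e_k\}$ has bounded $4\gamma$-th moments there exists $M_e > 0$ with $\sup_{k} \|e_{k-i}\|_{4\gamma} \le M_e$. Combined with $|f_i^{(j)}(k)| \le |f_i|$ and $\sum_{i} |f_i| < \infty$ from uniform stability, the tail of each partial sum is controlled in $L^{4\gamma}$ by $(M_r+M_e)\sum_{i \ge N} |f_i| \to 0$, so the partial sums are Cauchy in $L^{4\gamma}$ and the series lies in $L^{4\gamma}$.

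Second, I would apply Minkowski's inequality to the expansion of $y_k$ and then use the above bounds together with uniform stability:
\[
\|y_k\|_{4\gamma} \;\le\; \sum_{i=1}^{\infty} |f_i^{(1)}(k)|\,\|r_{k-i}\|_{4\gamma} + \sum_{i=0}^{\infty} |f_i^{(2)}(k)|\,\|e_{k-i}\|_{4\gamma} \;\le\; (M_r + M_e)\sum_{i=0}^{\infty} |f_i|.
\]
The right-hand side is finite and independent of $k$, so raising to the $4\gamma$-th power yields $\sup_k \mathbb{E}[|y_k|^{4\gamma}] < \infty$. The estimate for $u_k$ is entirely analogous, with $f_i^{(3)}(k)$ and $f_i^{(4)}(k)$ replacing $f_i^{(1)}(k)$ and $f_i^{(2)}(k)$.

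There is no substantial obstacle: the argument is a direct application of Minkowski's inequality together with the dominated summability furnished by uniform stability. The only minor care needed is to treat the infinite sums as $L^{4\gamma}$ limits rather than as purely formal expressions, which the envelope $\{|f_i|\}$ handles automatically.
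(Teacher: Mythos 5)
Your proof is correct, but it takes a different route from the paper's. The paper first applies the $C_r$-inequality to split $\mathbb{E}[|y_k|^{4\gamma}]$ into the deterministic part $|\sum_i f_i^{(1)}(k) r_{k-i}|^{4\gamma}$ and the stochastic part $\mathbb{E}[|\sum_i f_i^{(2)}(k) e_{k-i}|^{4\gamma}]$, bounds the former by the ordinary triangle inequality, and bounds the latter by a moment-expansion estimate of order $O\bigl(\bigl(\sum_i \sup_k |f_i^{(2)}(k)|\bigr)^{4\gamma}\sum_{i=1}^{4\gamma}\mathbb{E}[e_k^i]\bigr)$, which implicitly relies on expanding the $4\gamma$-th power and exploiting the independence and zero mean of $\{e_k\}$ to kill or control cross terms. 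You instead apply Minkowski's inequality in $L^{4\gamma}$ to the entire filter representation at once, reducing everything to $\|y_k\|_{4\gamma} \le (M_r+M_e)\sum_i |f_i|$. Your argument is cleaner and strictly more economical in its hypotheses: it needs only $\sup_k\|e_k\|_{4\gamma}<\infty$ and the summable envelope, not independence or zero mean of the noise, and it handles the two sums uniformly rather than by separate devices. It also makes explicit the $L^{4\gamma}$-convergence of the defining series, a point the paper glosses over. What the paper's route buys is consistency with the combinatorial machinery used later (e.g., in Theorem 3.1), where the same expansion-and-count-nonzero-terms technique reappears in a setting where Minkowski alone would not give the needed $O(k^\gamma)$ rate; for this proposition, however, your approach is entirely adequate.
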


\begin{proof}
We prove $\sup_{k}\mathbb{E}[\vert y_{k} \vert_{2}^{4\gamma}] < \infty$ first.
By the $C_{r}$-inequality \cite{P}, since $\{e_{k}\}_{k=1}^{\infty}$ is independent and $\mathbb{E}[e_{k}] \equiv 0$, one can get
\begin{align*}
\mathbb{E}\left[\left\vert y_{k}\right\vert^{4\gamma}\right] \leq & 2^{4\gamma-1} \left\vert \sum_{i=1}^{\infty} f_{i}^{(1)}(k)r_{k-i}\right\vert^{4\gamma}  + 2^{4\gamma-1} \mathbb{E}\left[ \left\vert\sum_{i=0}^{\infty} f_{i}^{(2)}(k)e_{k-i}\right\vert^{4\gamma} \right].
\end{align*}
Since $\{ f_{i}^{(1)}(k) \}_{i=1}^{\infty}$ and $\{ f_{i}^{(2)}(k) \}_{i=1}^{\infty}$ are uniformly stable, $\{r_{k}\}_{k=0}^{\infty}$ is bounded, and $\{e_{k}\}_{k=0}^{\infty}$ has bounded moments of order $4\gamma$, we have
\begin{align*}
 & \left\vert \sum_{i=1}^{\infty} f_{i}^{(1)}(k)r_{k-i}\right\vert ^{4\gamma}
 \leq  \left( \sum_{i=1}^{\infty} \mathop{\sup}\limits_{k} \left\vert f_{i}^{(1)}(k) \right\vert \right) ^{4\gamma} \left(\mathop{\sup}\limits_{k} \left\vert r_{k}\right\vert\right)^{4\gamma}  < \infty,
\end{align*}
and
\begin{align*}
& \mathbb{E}\left[ \left\vert\sum_{i=0}^{\infty} f_{i}^{(2)}(k)e_{k-i}\right\vert^{4\gamma} \right] 
=  O\left( \left( \sum_{i=1}^{\infty} \mathop{\sup}\limits_{k} \left\vert f_{i}^{(2)}(k) \right\vert \right) ^{4\gamma} \cdot \sum_{i=1}^{4\gamma} \mathbb{E}\left[e_{k}^{i}\right] \right) < \infty.
\end{align*}
Hence, $\sup_{k}\mathbb{E}[\vert y_{k} \vert_{2}^{4\gamma}] < \infty$.
Similarly, it can be proven that  $\sup_{k}\mathbb{E}[\vert u_{k} \vert_{2}^{4\gamma}] < \infty$.
\end{proof}

The bounded moments of the system regression vector can be correspondingly derived:

\begin{cor}\label{cor_b2b}
Under Assumptions \ref{ass_a}, the system regression vector has bounded moments of order $4\gamma$, i.e.,
\begin{align}\label{phik1}
\sup_{k}\mathbb{E}\left[\Vert \phi_{k} \Vert_{2}^{4\gamma}\right] < \infty.
\end{align}
\end{cor}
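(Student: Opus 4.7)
The plan is to reduce the statement about the vector norm $\|\phi_k\|_2^{4\gamma}$ to the scalar moment bounds on $y_k$ and $u_k$ that are already supplied by Proposition \ref{pro_b2b}. Since $\phi_k = [-y_{k-1},\ldots,-y_{k-m},u_{k-1},\ldots,u_{k-n}]^T$ has a fixed number of entries $m+n$, there is no dependence on $k$ in the dimension and the estimate will be a clean finite sum.

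First I would write
\[
\|\phi_k\|_2^{4\gamma} = \left(\sum_{i=1}^m y_{k-i}^{2} + \sum_{i=1}^n u_{k-i}^{2}\right)^{2\gamma},
\]
and then apply the $C_r$-inequality (or equivalently, the power mean inequality on the $m+n$ terms) to pull the $2\gamma$-th power inside the sum at the cost of a constant $(m+n)^{2\gamma-1}$. This reduces the task to showing that each $\mathbb{E}[y_{k-i}^{4\gamma}]$ and $\mathbb{E}[u_{k-i}^{4\gamma}]$ is uniformly bounded in $k$. After taking expectations and swapping with the (finite) sum, Proposition \ref{pro_b2b} supplies exactly the required uniform bound for each summand.

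The final step is just to combine: $\sup_k \mathbb{E}[\|\phi_k\|_2^{4\gamma}]$ is bounded by a finite constant times $(m+n)(\sup_k \mathbb{E}[|y_k|^{4\gamma}] + \sup_k \mathbb{E}[|u_k|^{4\gamma}])$, which is finite by Proposition \ref{pro_b2b}. A minor bookkeeping point is the initial condition convention $y_k = 0$ for $k \leq 0$ and $u_k = 0$ for $k < 0$, which guarantees the supremum bounds on $|y_{k-i}|^{4\gamma}$ and $|u_{k-i}|^{4\gamma}$ remain uniform even for small $k$ where some indices drop into the zero-padded region.

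Honestly there is no real obstacle here; the corollary is a direct consequence of the preceding proposition once one observes that $\|\phi_k\|_2^2$ is a sum of $m+n$ squared scalars whose $(2\gamma)$-th moments are already controlled. The only care needed is an explicit constant via the $C_r$-inequality, so the proof amounts to a one-line moment expansion followed by an appeal to Proposition \ref{pro_b2b}.
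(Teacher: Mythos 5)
Your proof is correct and is exactly the argument the paper leaves implicit: the corollary is stated without proof as a direct consequence of Proposition \ref{pro_b2b}, and your reduction via the $C_r$-inequality on the $m+n$ squared entries of $\phi_k$, followed by the uniform moment bounds on $y_k$ and $u_k$, is the intended derivation. The remark about the zero-padding convention for small $k$ is a sensible (if minor) bit of bookkeeping.
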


\end{note}

\section{Main results}\label{sec_c}
This section develops an analytical framework to demonstrate the asymptotic efficiency of the RLS algorithm without projection operators. This analysis proceeds as follows:

\romannumeral1) We begin by deriving the convergence rate $O(1/k^{\gamma})$ for the tail probability of the inverse covariance matrix associated with regression vectors.

\romannumeral2) Based on the rate analysis, we establish the $L^{\gamma/2}$ convergence of the RLS algorithm for ARX systems by designing appropriate inequalities.

\romannumeral3)  Finally, the asymptotic efficiency of the RLS algorithm for ARX systems is established.

\subsection{Convergence rate for the tail probability of the inverse covariance matrix associated with regression vectors}

By applying a combinatorial mathematics method, we first present the convergence rate $O(1/k^{\gamma})$ for the tail probability of quasi-stationary system input/output sample means:
\begin{thm}\label{lemma_a}
Under Assumptions \ref{ass_a}, for any integer $\tau$ and any positive constant $\varepsilon$, 
\begin{align}
\label{lemma_a_1}
& \mathbb{P}\left(\left\vert \frac{1}{k}\sum_{l=1}^{k} y_{l} y_{l-\tau} - R_{\tau}^{y} \right\vert > \varepsilon\right) = O\left(\frac{1}{k^{\gamma}}\right), \\
\label{lemma_a_2}
& \mathbb{P}\left(\left\vert \frac{1}{k}\sum_{l=1}^{k} u_{l} u_{l-\tau} - R_{\tau}^{u} \right\vert > \varepsilon\right) = O\left(\frac{1}{k^{\gamma}}\right), \\
\label{lemma_a_3}
& \mathbb{P}\left(\left\vert \frac{1}{k}\sum_{l=1}^{k} y_{l} u_{l-\tau} - R_{\tau}^{yu} \right\vert > \varepsilon\right) = O\left(\frac{1}{k^{\gamma}}\right).
\end{align}
\end{thm}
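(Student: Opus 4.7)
The plan is to reduce each of the three tail-probability bounds in (\ref{lemma_a_1})--(\ref{lemma_a_3}) to a moment bound via Markov's inequality of order $2\gamma$, and then to control that $2\gamma$-th moment by expanding the centered sample means in the driving noise $\{e_k\}$ and counting the surviving terms combinatorially.

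For (\ref{lemma_a_1}) I would first split
$$\frac{1}{k}\sum_{l=1}^{k} y_l y_{l-\tau} - R_{\tau}^{y} = \frac{1}{k}\sum_{l=1}^{k} Z_l + \Delta_k, \quad Z_l \triangleq y_l y_{l-\tau} - \mathbb{E}[y_l y_{l-\tau}],$$
where $\Delta_k \triangleq \frac{1}{k}\sum_{l=1}^{k}\mathbb{E}[y_l y_{l-\tau}] - R_{\tau}^{y}$. By the definition of $\bar{\mathbb{E}}$ in Definition \ref{def_a} and the existence of $R_{\tau}^{y}$ in Assumption \ref{ass_a}(ii), $\Delta_k \to 0$, so for every sufficiently large $k$ one has $|\Delta_k| < \varepsilon/2$ and it suffices to bound $\mathbb{P}(|\frac{1}{k}\sum_{l=1}^{k} Z_l| > \varepsilon/2)$. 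Markov's inequality at exponent $2\gamma$ gives
$$\mathbb{P}\left(\left\vert \frac{1}{k}\sum_{l=1}^{k} Z_l \right\vert > \frac{\varepsilon}{2}\right) \leq \frac{(2/\varepsilon)^{2\gamma}}{k^{2\gamma}} \mathbb{E}\left[\left(\sum_{l=1}^{k} Z_l\right)^{2\gamma}\right],$$
so the theorem is reduced to proving $\mathbb{E}[(\sum_{l=1}^{k} Z_l)^{2\gamma}] = O(k^{\gamma})$.

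To obtain this moment bound I would substitute the linear-filter representation from Assumption \ref{ass_a} and write $y_l = D_l + S_l$ with $D_l = \sum_{i\geq 1} f_i^{(1)}(l) r_{l-i}$ uniformly bounded and deterministic, and $S_l = \sum_{i\geq 0} f_i^{(2)}(l) e_{l-i}$. Then $Z_l$ becomes a polynomial in $\{e_s\}_{s \leq l}$ of degree at most two with $\mathbb{E}[Z_l]=0$, consisting of a linear-in-$e$ part (from $D_l S_{l-\tau}$ and $S_l D_{l-\tau}$) and a centered bilinear part from $S_l S_{l-\tau}$. Expanding $\mathbb{E}[Z_{l_1}\cdots Z_{l_{2\gamma}}]$ produces a sum of expectations of products of at most $4\gamma$ factors $e_\cdot$; by independence and $\mathbb{E}[e_k]=0$ a term is nonzero only when every noise index appears with multiplicity at least $2$, and because each $Z_{l_j}$ is individually mean-zero, each outer index $l_j$ must share a noise timestamp with at least one other $l_{j'}$. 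The $2\gamma$ outer indices therefore partition into at most $\gamma$ clusters of size $\geq 2$, yielding $O(k^{\gamma})$ tuples $(l_1,\ldots,l_{2\gamma}) \in [1,k]^{2\gamma}$ with nonvanishing contribution. Each such contribution is bounded uniformly by combining the absolute summability $\sum_{i} |f_i| < \infty$ (Assumption \ref{ass_a}(i)) with the bounded $4\gamma$-th moment of $e_k$, yielding $\mathbb{E}[(\sum_{l=1}^{k} Z_l)^{2\gamma}] = O(k^{\gamma})$ as required.

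The main obstacle is the clean execution of this clustering argument: the mixed linear/bilinear expansion of $Z_l$ produces a combinatorially rich set of noise-index patterns whose bookkeeping must be coordinated with the absolutely summable filter tails so that all internal sums over $i,j$ stay finite while the count of admissible outer tuples remains $O(k^{\gamma})$. Once (\ref{lemma_a_1}) is settled, the derivation of (\ref{lemma_a_2}) and (\ref{lemma_a_3}) is identical, replacing the filter pair $(f^{(1)},f^{(2)})$ by $(f^{(3)},f^{(4)})$ or by the mixed pair, since $y_k$ and $u_k$ are driven by the same noise sequence $\{e_k\}$ and the zero-mean/bounded-moment structure carries over without change.
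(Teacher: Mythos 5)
Your proposal follows essentially the same route as the paper's proof: decompose $y_l$ into its deterministic and stochastic filtered parts, reduce the tail bound via Markov's inequality at order $2\gamma$ to the moment bound $\mathbb{E}[(\sum_{l=1}^{k} Z_l)^{2\gamma}] = O(k^{\gamma})$, and obtain that bound by the same combinatorial clustering of the mean-zero factors into at most $\gamma$ mutually dependent groups, with the filter tails controlled by absolute summability. The only (welcome) refinement is your explicit treatment of the bias term $\Delta_k = \frac{1}{k}\sum_{l=1}^{k}\mathbb{E}[y_l y_{l-\tau}] - R_{\tau}^{y} \to 0$ via the $\varepsilon/2$ split, a step the paper passes over by identifying the two tail events directly.
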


\begin{proof}
Since the signals $\{y_{k}\}_{k=1}^{\infty}$ and $\{u_{k}\}_{k=0}^{\infty}$ are jointly quasi-stationary, $R_{\tau}^{y}$, $R_{\tau}^{yu}$ and $R_{\tau}^{u}$ exist directly.

We prove (\ref{lemma_a_1}) first. 
Define $w_{k} \triangleq \sum_{i=1}^{\infty}  f_{i}^{(1)}(k)r_{k-i}$ and  $v_{k}  \triangleq \sum_{i=0}^{\infty} f_{i}^{(2)}(k)e_{k-i}.$
Then, from Assumptions \ref{ass_a}, $\{w_{k}\}_{k=1}^{\infty}$ is a bounded and deterministic, $\{v_{k}\}_{k=0}^{\infty}$ is  stochastic, and
\begin{align}\label{wwwvvv}
y_{k} = w_{k} + v_{k}.
\end{align}
Define $Y_{k} \triangleq \sum_{l=1}^{k} y_{l}y_{l-\tau} - \mathbb{E}[y_{l}y_{l-\tau}]$, $V_{k}  \triangleq \sum_{l=1}^{k} v_{l}v_{l-\tau} - \mathbb{E}[v_{l}v_{l-\tau}]$, $V_{k}^{(1)}  \triangleq \sum_{l=1}^{k} v_{l}w_{l-\tau}$ and $V_{k}^{(2)}  \triangleq \sum_{l=1}^{k} w_{l}v_{l-\tau}.$
For all $l \geq 1$, since $\mathbb{E}[v_{l}w_{l-\tau}] = 0$ and $\mathbb{E}[w_{l}v_{l-\tau}] = 0$, we have
\begin{align*}
\mathbb{E}\left[y_{l}y_{l-\tau}\right] & = \mathbb{E}\left[\left(w_{l} + v_{l}\right)\left(w_{l-\tau} + v_{l-\tau}\right)\right] \\
& = w_{l}w_{l-\tau} + \mathbb{E}\left[ v_{l}v_{l-\tau}\right],
\end{align*}
which implies
\begin{align}\label{pp11}
Y_{k} = V_{k} + V_{k}^{(1)} + V_{k}^{(2)}.
\end{align} 




Now we prove $\mathbb{E}[V_{k}^{2\gamma}] = O(k^{\gamma})$.

Since $v_{k}  = \sum_{i=0}^{\infty} f_{i}^{(2)}(k)e_{k-i}$ and $V_{k} = \sum_{l=1}^{k} v_{l}v_{l-\tau} - \mathbb{E}[v_{l}v_{l-\tau}]$, $V_{k}$ can be expressed as
\begin{align}\label{V0}
V_{k} = \sum_{l=1}^{k}\sum_{i=0}^{\infty}\sum_{j=0}^{\infty}f_{i}^{(2)}(l)f_{j}^{(2)}(l-\tau)b(l,i,j,\tau).
\end{align}
where $b(l,i,j,\tau) = e_{l-i}e_{l-\tau-j}-\delta_{e}^{2}I_{\{i=\tau+j\}}$.
Since $\{e_{k}\}_{k=0}^{\infty}$ is a sequence of independent variables with zero mean values and bounded moments of order $4 \gamma$, we have
\begin{align}
\label{b0}  \mathbb{E}\left[b(l,i,j,\tau)\right] = 0,
\end{align}
and
\begin{align}
\label{b1} \mathop{\sup}\limits_{i,j,l,\tau} \mathbb{E}\left[\left(b(l,i,j,\tau)\right)^{2\gamma}\right] = O(1).
\end{align}

By (\ref{V0}), one can get
\begin{align*}
V_{k}^{2\gamma} = & \sum_{l_{1}=1}^{k}\cdots\sum_{l_{2\gamma}=1}^{k}\sum_{i_{1}=0}^{\infty}\cdots\sum_{i_{2\gamma}=0}^{\infty}\sum_{j_{1}=0}^{\infty}\cdots\sum_{j_{2\gamma}=0}^{\infty} \nonumber \\
& \prod_{t=1}^{2\gamma}f_{i_{t}}^{(2)}(l_{t})f_{j_{t}}^{(2)}(l_{t}-\tau)b(l_{t},i_{t},j_{t},\tau).
\end{align*}
Consequently,
\begin{align}\label{V2}
\mathbb{E}\left[V_{k}^{2\gamma}\right] \leq & \sum_{i_{1}=0}^{\infty}\cdots\sum_{i_{2\gamma}=0}^{\infty}\sum_{j_{1}=0}^{\infty}\cdots\sum_{j_{2\gamma}=0}^{\infty} \sum_{l_{1}=1}^{k}\cdots\sum_{l_{2\gamma}=1}^{k} \nonumber \\
& \left(\prod_{t=1}^{2\gamma} \left\vert f_{i_{t}}^{(2)}(l_{t}) \right\vert \left\vert f_{j_{t}}^{(2)}(l_{t}-\tau) \right\vert \right)
 \cdot \left\vert \mathbb{E}\left[ \prod_{t=1}^{2\gamma} b(l_{t},i_{t},j_{t},\tau) \right]\right\vert \nonumber \\
\leq & \sum_{i_{1}=0}^{\infty}\cdots\sum_{i_{2\gamma}=0}^{\infty}\sum_{j_{1}=0}^{\infty}\cdots\sum_{j_{2\gamma}=0}^{\infty}  \left(\prod_{t=1}^{2\gamma} \left(\mathop{\sup}\limits_{l} \left\vert f_{i_{t}}^{(2)}(l) \right\vert\right)  \left(\mathop{\sup}\limits_{l} \left\vert f_{j_{t}}^{(2)}(l) \right\vert\right) \right)\nonumber \\
& \sum_{l_{1}=1}^{k}\cdots\sum_{l_{2\gamma}=1}^{k}  \left\vert \mathbb{E}\left[ \prod_{t=1}^{2\gamma} b(l_{t},i_{t},j_{t},\tau) \right]\right\vert.
\end{align}
For each $i_{1}, i_{2}, \ldots, i_{2\gamma}, j_{1}, j_{2}, \ldots, j_{2\gamma} \in \{0,1,2,\ldots\}$ and $l_{1}, l_{2}, \ldots, l_{2\gamma} \in \{1,2,\ldots,k\}$, by (\ref{b1}) and H\"{o}lder's Inequality \cite{P}, there exists a positive constant $C_{b}$ such that
\begin{align*}
 & \left\vert \mathbb{E}\left[ \prod_{t=1}^{2\gamma} b(l_{t},i_{t},j_{t},\tau) \right]\right\vert 
\leq  \prod_{t=1}^{2\gamma} \left( \mathop{\sup}\limits_{i,j,l,\tau}\mathbb{E}\left[\left(b(l,i,j,\tau)\right)^{2\gamma}\right]\right)^{1/2\gamma} \leq C_{b}.
\end{align*}
For each given $i_{1}, i_{2}, \ldots, i_{2\gamma}, j_{1}, j_{2}, \ldots, j_{2\gamma}$, we define $n_{k}(i_{1},\ldots,i_{2\gamma},j_{1},\ldots,j_{2\gamma})$ as the total number of the non-zero terms in $\vert \mathbb{E}[\prod_{t=1}^{2\gamma}b(l_{t},i_{t},j_{t},\tau)] \vert$  for all possible index combinations \(l_{1}, l_{2}, \ldots, l_{2\gamma} \in \{1,2,\ldots,k\}\).
Then, we have
\[\sum_{l_{1}=1}^{k}\cdots\sum_{l_{2\gamma}=1}^{k}  \left\vert \mathbb{E}\left[ \prod_{t=1}^{2\gamma} b(l_{t},i_{t},j_{t},\tau) \right]\right\vert \leq N_{k}C_{b},
\]
where
\[
N_{k} \triangleq \sup_{i_{1},\ldots,i_{2\gamma},j_{1},\ldots,j_{2\gamma}}n_{k}(i_{1},\ldots,i_{2\gamma},j_{1},\ldots,j_{2\gamma}).
\]
Besides, by $\sup_{l}\vert f_{i}^{(2)}(l) \vert\leq \vert f_{i} \vert$ and $\sum_{i=1}^{\infty} \vert f_{i} \vert < \infty $, there exists a positive constant $C_{f}$ such that 
\begin{align*}
\sum_{i=0}^{\infty}\mathop{\sup}\limits_{l} \left\vert f_{i}^{(2)}(l) \right\vert \leq C_{f}.
\end{align*}
Therefore, by (\ref{V2}), we have
\[
\mathbb{E}\left[V_{k}^{2\gamma}\right] \leq C_{f}^{4\gamma}C_{b}N_{k}.
\]
Thus, the problem of determining the upper bound order of the term $\mathbb{E}[V_{k}^{2\gamma}] $ with respect to \(k\) can be transformed into a combinatorial mathematical problem that calculates $N_{k}$.

By (\ref{b0}), for each $i_{1}, i_{2}, \ldots, i_{2\gamma}, j_{1}, j_{2}, \ldots, j_{2\gamma} \in \{0,1,2,\ldots\}$ and $l_{1}, l_{2}, \ldots, l_{2\gamma} \in \{1,2,\ldots,k\}$, it holds that $\vert \mathbb{E}[\prod_{t=1}^{2\gamma}b(l_{t},i_{t},j_{t},\tau)] \vert$ is non-zero only if each element in $\{b(l_{t}, i_{t},j_{t},\tau)\}_{t=1}^{2\gamma}$ is related to at least one other element within the same sequence. 
In this case, $\{b(l_{t}, i_{t},j_{t},\tau)\}_{t=1}^{2\gamma}$  can have at most \(\gamma\) pairwise independent elements.

Now, we use \(\gamma\) different cases to partition all non-zero instances of $\vert \mathbb{E}[\prod_{t=1}^{2\gamma}b(l_{t},i_{t},j_{t},\tau)] \vert$.
Specifically, if there are no two independent elements in $\{b(l_{t}, i_{t},j_{t},\tau)\}_{t=1}^{2\gamma}$, we denote this situation as Case 1; if $\{b(l_{t}, i_{t},j_{t},\tau)\}_{t=1}^{2\gamma}$ contains at most two pairwise independent elements, we denote this situation as Case 2; $\ldots$
Since $\{b(l_{t}, i_{t},j_{t},\tau)\}_{t=1}^{2\gamma}$ contains at most \(\gamma\) pairwise independent elements, we can use \(\gamma\) different cases to partition all non-zero instances of $\vert \mathbb{E}[\prod_{t=1}^{2\gamma}b(l_{t},i_{t},j_{t},\tau)] \vert$.

Next, we will compute all possibilities in each Case \(\rho\), where \(\rho \in \{1, 2, \ldots, \gamma\}\). 
To begin with, we partition the sequence $\{b(l_{t}, i_{t},j_{t},\tau)\}_{t=1}^{2\gamma}$ into \(\rho\) distinct non-empty subsequences, the elements in each subsequence are pairwise related, and the elements drawn from different subsequences are independent of each other. 
The total number of possible partitions is represented by the Stirling number of the second kind \(S(2\gamma, \rho)\), which denotes the number of ways to partition \(2\gamma\) elements into \(\rho\) non-empty subsets \cite{graham1994concrete}.

It is worthy noticing that, if \( b(l_{\mu}, i_{\mu}, j_{\mu}, \tau) \) and \( b(l_{\nu}, i_{\nu}, j_{\nu}, \tau) \) are related, where \( \mu, \nu \in \{ 1, 2, \ldots, 2\gamma \} \) and \( \mu \neq \nu \), then one of the following conditions must hold:
\begin{align*}
&l_{\mu}-i_{\mu} = l_{\nu}-i_{\nu} \quad \mathrm{or} \quad l_{\mu}-i_{\mu} = l_{\nu}-\tau-j_{\nu} \quad \mathrm{or} \\
&l_{\mu}-\tau-j_{\mu} = l_{\nu}-i_{\nu} \quad \mathrm{or} \quad l_{\mu}-\tau-j_{\mu} = l_{\nu}-\tau-j_{\nu}.
\end{align*}
Thus, with specified values of \( i_{\mu}, j_{\mu}, i_{\nu}, j_{\nu} \), and \( l_{\mu} \), there are at most four possible values for \( l_{\nu} \) when \( b(l_{\mu}, i_{\mu}, j_{\mu}, \tau) \) and \( b(l_{\nu}, i_{\nu}, j_{\nu}, \tau) \) are related.

Since each element can take on at most \(k\) possible values, each partitioned subsequence can take on at most $4^{n_{sub}-1}k$ possible values, where $n_{sub} \leq 2\gamma$ represents the number of elements in this subsequence.
Therefore, the total number of possible value combinations for the \(\rho\) subsequences is at most \((4^{2\gamma-1} k)^{\rho}\).
By multiplying the number of partitions by the probability of each partition, we obtain the total probability for Case \(\rho\) is at most $S(2\gamma, \rho) (4^{2\gamma-1} k)^{\rho}.$

Since $S(2\gamma, \rho)$ is independent of $k$, we have
\[
N_{k} \leq \sum_{\rho=1}^{\gamma} S(2\gamma, \rho)\left(4^{2\gamma-1} k\right)^{\rho} = O(k^\gamma).
\]
which indicates that
\begin{align*}
\mathbb{E}\left[V_{k}^{2\gamma}\right] = O(k^{\gamma}).
\end{align*}
Similarly, it can be proven that $\mathbb{E}[(V_{k}^{(1)})^{2\gamma}] = O\left(k^{\gamma}\right)$ and $\mathbb{E}[(V_{k}^{(2)})^{2\gamma}] = O(k^{\gamma})$.
Then, by  (\ref{pp11}) and the $C_{r}$-inequality \cite{P}, we have
\begin{align*}
\mathbb{E}\left[Y_{k}^{2\gamma}\right] \leq & 2^{2\gamma-1} \mathbb{E}\left[V_{k}^{2\gamma}\right] +2^{2\gamma-1}\mathbb{E}\left[\left(V_{k}^{(1)}\right)^{2\gamma}\right]  + 2^{2\gamma-1}\mathbb{E}\left[\left(V_{k}^{(2)}\right)^{2\gamma}\right] \\
& = O\left(k^{\gamma}\right).
\end{align*}
In addition, %
by the Markov inequality \cite{P}, it holds that
\begin{align*}
\mathbb{P}\left( \left\vert Y_{k} \right\vert \geq k\varepsilon \right) \leq \frac{\mathbb{E}[Y_{k}^{2\gamma}] }{ \varepsilon^{2\gamma}k^{2\gamma}}  = O\left(\frac{1}{k^{\gamma}}\right).
\end{align*}
Since $\mathbb{P}( \vert\sum_{l=1}^{k}y_{l}y_{l-\tau} - R_{\tau}^{y}\vert/k > \varepsilon) =  \mathbb{P}( \vert Y_{k} \vert/k\geq\varepsilon)$, we have (\ref{lemma_a_1}) holds, where (\ref{lemma_a_2}) and (\ref{lemma_a_3}) can be proven analogously.
\end{proof}

Then, the inverse covariance matrix of regression vectors $P_{k}$ has the corresponding property:

\begin{cor}\label{lemma_b}
Under Assumptions \ref{ass_a}, for any positive constants $\varepsilon$,
\begin{align}
\label{lemma_b_1} & \mathbb{P}\left(\left\Vert  \bar{\mathbb{E}}\left[ \phi_{k}\phi_{k}^{T}\right] - \frac{P_{k}^{-1}}{k} \right\Vert_{2}> \varepsilon\right) = O\left(\frac{1}{k^{\gamma}}\right), \\
\label{lemma_b_2} & \mathbb{P}\left(\left\Vert P_{k} \right\Vert_{2} > \frac{2\Vert (\bar{\mathbb{E}}[ \phi_{k}\phi_{k}^{T}])^{-1} \Vert_{2}}{k}\right) = O\left(\frac{1}{k^{\gamma}}\right).
\end{align}
\end{cor}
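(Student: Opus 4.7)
The plan is to combine the entrywise sample-covariance convergence rates from Theorem \ref{lemma_a} with a standard matrix perturbation bound. By the matrix inversion lemma applied to the RLS update for $P_k$, we have the identity $P_k^{-1} = P_0^{-1} + \sum_{l=1}^k \phi_l\phi_l^T$, hence
\[
\frac{P_k^{-1}}{k} = \frac{P_0^{-1}}{k} + \frac{1}{k}\sum_{l=1}^k \phi_l\phi_l^T.
\]
Since $\phi_l$ is a finite window of shifted $y$'s and $u$'s, each of the $(m+n)^2$ entries of $\frac{1}{k}\sum_{l=1}^k \phi_l\phi_l^T$ is, up to a sign and an index shift of at most $\max(m,n)$ in the summation range, one of $\frac{1}{k}\sum_{l=1}^k y_l y_{l-\tau}$, $\frac{1}{k}\sum_{l=1}^k u_l u_{l-\tau}$, or $\frac{1}{k}\sum_{l=1}^k y_l u_{l-\tau}$ for some integer $\tau$. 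The boundary index-shift discrepancy involves $O(1)$ summands and hence contributes an $O(1/k)$ deterministic error that is absorbable into the slack.

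For (\ref{lemma_b_1}), I would invoke the elementary matrix-norm bound $\|M\|_2 \le (m+n)\max_{i,j}|M_{i,j}|$: the event $\{\|\bar{\mathbb{E}}[\phi_k\phi_k^T] - P_k^{-1}/k\|_2 > \varepsilon\}$ is contained in the union, over the finitely many entries, of the events that the corresponding sample covariance deviates from its limit by more than $\varepsilon/(m+n)$. By Theorem \ref{lemma_a} each such entrywise event has probability $O(1/k^\gamma)$, and the deterministic term $P_0^{-1}/k$ is negligible for large $k$, so a finite union bound yields (\ref{lemma_b_1}).

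For (\ref{lemma_b_2}), set $A := \bar{\mathbb{E}}[\phi_k\phi_k^T]$, which is positive definite by Assumption \ref{ass_a}(iii), and apply (\ref{lemma_b_1}) with the particular choice $\varepsilon_0 := 1/(2\|A^{-1}\|_2)$. On the event $\{\|A - P_k^{-1}/k\|_2 \le \varepsilon_0\}$, the standard Neumann-series perturbation bound for matrix inverses yields
\[
\|kP_k\|_2 = \left\|\left(\tfrac{P_k^{-1}}{k}\right)^{-1}\right\|_2 \le \frac{\|A^{-1}\|_2}{1 - \|A^{-1}\|_2\,\varepsilon_0} = 2\|A^{-1}\|_2,
\]
so $\|P_k\|_2 \le 2\|A^{-1}\|_2/k$ on this event, and its complement has probability $O(1/k^\gamma)$ by (\ref{lemma_b_1}). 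This delivers (\ref{lemma_b_2}).

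I do not foresee a deep obstacle here; the argument is essentially a packaging of Theorem \ref{lemma_a} with matrix perturbation theory. The only technical care needed is (i) the bookkeeping of the finite index shifts so that each entry of $\frac{1}{k}\sum_{l}\phi_l\phi_l^T$ aligns with one of $R_\tau^y$, $R_\tau^u$, or $R_\tau^{yu}$ up to a negligible boundary correction, and (ii) a judicious choice of $\varepsilon_0$ in the perturbation step so that the perturbed inverse is controlled with the stated factor of $2$.
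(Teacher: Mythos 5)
Your proposal is correct and follows essentially the same route as the paper: the same identity $P_k^{-1} = P_0^{-1} + \sum_{l=1}^k \phi_l\phi_l^T$, absorption of the deterministic $P_0^{-1}/k$ term, a finite union bound over entries combined with norm equivalence to invoke Theorem \ref{lemma_a}, and a perturbation argument with the threshold $\varepsilon < 1/(2\Vert(\bar{\mathbb{E}}[\phi_k\phi_k^T])^{-1}\Vert_2)$ for the second claim. Your explicit Neumann-series bound is algebraically equivalent to the paper's self-referential inequality $\Vert kP_k\Vert_2 < \tfrac{1}{2}\Vert kP_k\Vert_2 + \Vert(\bar{\mathbb{E}}[\phi_k\phi_k^T])^{-1}\Vert_2$, and your bookkeeping of the boundary index shifts is a detail the paper leaves implicit.
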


\begin{proof}
By the Woodbury matrix identity \cite{ljung1987theory}, one can get
\begin{align}\label{PP1}
P_{k} = \left(\sum_{l=1}^{k} \phi_{l}\phi_{l}^{T} + P_{0}^{-1}\right)^{-1}.
\end{align} 
Hence, 
\[
\bar{\mathbb{E}}[ \phi_{k}\phi_{k}^{T}] - \frac{P_{k}^{-1}}{k} = \bar{\mathbb{E}}[ \phi_{k}\phi_{k}^{T}]  - \frac{1}{k}\sum_{l=1}^{k}\phi_{l}\phi_{l}^{T} - \frac{P_{0}^{-1}}{k}.
\]
Since $P_{0}$ is a positive definitive matrix, when $k$ is sufficiently large, $\Vert P_{0}^{-1}/k \Vert_{2} \leq \varepsilon/2$.
Then, for sufficiently large $k$, by DeMorgan's Law \cite{casella2024statistical} and Boole's Inequality \cite{casella2024statistical},  
we have
\begin{align*}
& \mathbb{P}\left(\left\Vert  \bar{\mathbb{E}}\left[ \phi_{k}\phi_{k}^{T}\right] - \frac{P_{k}^{-1}}{k} \right\Vert_{2}> \varepsilon\right) 
\\
\leq & \mathbb{P}\left(\left\Vert  \bar{\mathbb{E}}\left[ \phi_{k}\phi_{k}^{T}\right] - \frac{1}{k}\sum_{l=1}^{k}\phi_{l}\phi_{l}^{T}  \right\Vert_{2}> \frac{\varepsilon}{2}\right).
\end{align*}
By  Lemma \ref{lemma_aaa333} in Appendix A and Theorem \ref{lemma_a}, it holds that 
\begin{align*}
& \mathbb{P}\left(\left\Vert  \bar{\mathbb{E}}\left[ \phi_{k}\phi_{k}^{T}\right] - \frac{1}{k}\sum_{l=1}^{k}\phi_{l}\phi_{l}^{T} \right\Vert_{2}> \frac{\varepsilon}{2}\right) \\
= & O\left(\mathbb{P}\left( \mathop{\max}\limits_{1 \leq i,j \leq n} \left\vert \left( \bar{\mathbb{E}}\left[ \phi_{k}\phi_{k}^{T}\right]  - \frac{1}{k}\sum_{l=1}^{k}\phi_{l}\phi_{l}^{T}\right)_{i,j} \right\vert> \frac{\varepsilon}{2}\right)\right) 
= O\left(\frac{1}{k^{\gamma}}\right),
\end{align*}
which implies that (\ref{lemma_b_1}) holds.

Furthermore, for $\Vert kP_{k} \Vert_{2}$, we obtain
\begin{align*}
\left\Vert kP_{k} \right\Vert_{2} = & \left\Vert kP_{k} - \left(\bar{\mathbb{E}}\left[ \phi_{k}\phi_{k}^{T}\right] \right)^{-1} +\left(\bar{\mathbb{E}}\left[ \phi_{k}\phi_{k}^{T}\right] \right)^{-1} \right\Vert_{2} \\
\leq & \left\Vert kP_{k} - \left(\bar{\mathbb{E}}\left[ \phi_{k}\phi_{k}^{T}\right] \right)^{-1} \right\Vert_{2} + \left\Vert \left(\bar{\mathbb{E}}\left[ \phi_{k}\phi_{k}^{T}\right] \right)^{-1} \right\Vert_{2} \\
= &  \left\Vert kP_{k} \left( \bar{\mathbb{E}}\left[ \phi_{k}\phi_{k}^{T}\right] - \frac{P_{k}^{-1}}{k} \right) \left(\bar{\mathbb{E}}\left[ \phi_{k}\phi_{k}^{T}\right] \right)^{-1} \right\Vert_{2} 
 + \left\Vert \left(\bar{\mathbb{E}}\left[ \phi_{k}\phi_{k}^{T}\right] \right)^{-1} \right\Vert_{2} \\
\leq & \left\Vert kP_{k} \right\Vert_{2} \left\Vert  \bar{\mathbb{E}}\left[ \phi_{k}\phi_{k}^{T}\right] - \frac{P_{k}^{-1}}{k} \right\Vert_{2} \left\Vert \left(\bar{\mathbb{E}}\left[ \phi_{k}\phi_{k}^{T}\right]\right)^{-1} \right\Vert_{2} 
 + \left\Vert \left(\bar{\mathbb{E}}\left[ \phi_{k}\phi_{k}^{T}\right] \right)^{-1} \right\Vert_{2}.
\end{align*}
Let $\varepsilon < 1/(2\Vert (\bar{\mathbb{E}}[ \phi_{k}\phi_{k}^{T}] )^{-1} \Vert_{2})$. Then, in the case where $\Vert  \bar{\mathbb{E}}\left[ \phi_{k}\phi_{k}^{T}\right]  - P_{k}^{-1}/k \Vert_{2} \leq \varepsilon$, we have
\begin{align*}
\left\Vert kP_{k} \right\Vert_{2} < \frac{1}{2}\left\Vert kP_{k} \right\Vert_{2} + \left\Vert \left(\bar{\mathbb{E}}\left[ \phi_{k}\phi_{k}^{T}\right]\right)^{-1} \right\Vert_{2},
\end{align*}
which follows that
\begin{align*}
\left\Vert P_{k} \right\Vert_{2} < \frac{2\Vert (\bar{\mathbb{E}}[ \phi_{k}\phi_{k}^{T}])^{-1} \Vert_{2}}{k}.
\end{align*}
Hence, we have (\ref{lemma_b_2}) holds.
\end{proof}



\subsection{$L^{\gamma/2}$ convergence of the RLS algorithm for ARX systems}

Based on Corollary \ref{lemma_b}, the $L^{\gamma/2}$ convergence of the RLS algorithm for ARX systems is established as follows:

\begin{thm}\label{lemma_c3}
Under Assumptions \ref{ass_a}-\ref{ass_c}, the RLS estimate $\hat{\theta}_{k}$ converges to $\theta$ in the $L^{\gamma/2}$ sense with a convergence rate of $O(1/k^{\gamma/4})$, i.e.,
\begin{align}
\label{lemma_e_1} \mathbb{E}\left[ \left\Vert \tilde{\theta}_{k} \right\Vert_{2}^{\gamma/2} \right] = O\left(\frac{1}{k^{\gamma/4}}\right).
\end{align}
\end{thm}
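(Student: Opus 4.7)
The plan is to write $\tilde{\theta}_k$ in closed form and split the expectation over a ``good'' event, on which $\|P_k\|_2$ decays like $1/k$, and its complement, both controlled by Corollary \ref{lemma_b}. Substituting $y_l = \phi_l^T\theta + d_l$ into the standard closed-form $\hat{\theta}_k = P_k(P_0^{-1}\hat{\theta}_0 + \sum_{l=1}^k \phi_l y_l)$ (obtained from the Woodbury identity in Corollary \ref{lemma_b}) yields
\begin{align*}
\tilde{\theta}_k = P_k M_k + P_k P_0^{-1}\tilde{\theta}_0, \qquad M_k \triangleq \sum_{l=1}^k \phi_l d_l.
\end{align*}
Here $\{\phi_l d_l\}$ is a martingale difference sequence with respect to $\mathcal{F}_{l-1} \triangleq \sigma(d_1,\ldots,d_{l-1},\{r_k\},\{e_k\})$, because $\phi_l$ is $\mathcal{F}_{l-1}$-measurable whereas $d_l$ is a zero-mean Gaussian independent of $\mathcal{F}_{l-1}$ under Assumption \ref{ass_c}.

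Define the good event $\Omega_k \triangleq \{\|P_k\|_2 \leq 2\|(\bar{\mathbb{E}}[\phi_k\phi_k^T])^{-1}\|_2/k\}$; by Corollary \ref{lemma_b}, $\mathbb{P}(\Omega_k^c) = O(1/k^\gamma)$. On $\Omega_k$ we have $\|\tilde{\theta}_k\|_2 \leq (C/k)(\|M_k\|_2 + \|P_0^{-1}\tilde{\theta}_0\|_2)$. Applying Burkholder's inequality to $M_k$, followed by the $C_r$-inequality applied to $(\sum_l\|\phi_l\|_2^2 d_l^2)^{\gamma/4}$, and using the bounded $4\gamma$-moments of $\phi_l$ (Corollary \ref{cor_b2b}) together with those of $d_l$, one obtains $\mathbb{E}[\|M_k\|_2^{\gamma/2}] = O(k^{\gamma/4})$. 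Consequently,
\begin{align*}
\mathbb{E}\bigl[\|\tilde{\theta}_k\|_2^{\gamma/2} I_{\Omega_k}\bigr] \leq \left(\frac{C}{k}\right)^{\gamma/2}\mathbb{E}\bigl[(\|M_k\|_2 + \|P_0^{-1}\tilde{\theta}_0\|_2)^{\gamma/2}\bigr] = O(1/k^{\gamma/4}),
\end{align*}
which already matches the target rate.

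For the complementary event, I would exploit the a.s. bound $\|P_k\|_2 \leq \|P_0\|_2$ (which follows from $P_k^{-1} \geq P_0^{-1}$ in the positive semidefinite order). Applying Burkholder with exponent $2\gamma$ in the same way as above gives $\mathbb{E}[\|M_k\|_2^{2\gamma}] = O(k^\gamma)$ and hence $\mathbb{E}[\|\tilde{\theta}_k\|_2^{2\gamma}] = O(k^\gamma)$. Choosing Hölder conjugate exponents $4$ and $4/3$,
\begin{align*}
\mathbb{E}\bigl[\|\tilde{\theta}_k\|_2^{\gamma/2} I_{\Omega_k^c}\bigr] \leq \bigl(\mathbb{E}[\|\tilde{\theta}_k\|_2^{2\gamma}]\bigr)^{1/4}\,\mathbb{P}(\Omega_k^c)^{3/4} = O(k^{\gamma/4})\cdot O(k^{-3\gamma/4}) = O(k^{-\gamma/2}),
\end{align*}
which is $o(1/k^{\gamma/4})$. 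Adding the two contributions gives the desired $O(1/k^{\gamma/4})$ bound.

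The main obstacle is precisely the bad-event term: without the projection operator, $\hat{\theta}_k$ is not a priori bounded, so the only handle on $\|\tilde{\theta}_k\|_2$ on $\Omega_k^c$ is via the crude a.s. inequality $\|P_k\|_2 \leq \|P_0\|_2$, which yields only a polynomially growing moment. The balance succeeds because the tail rate $O(1/k^\gamma)$ provided by Corollary \ref{lemma_b} is sharp enough to absorb this growth through Hölder; the exponents $4$ and $4/3$ are chosen so that the auxiliary moment required, $\mathbb{E}[\|\tilde{\theta}_k\|_2^{2\gamma}]$, stays within the reach of the available $4\gamma$-th order moment bounds on the system signals.
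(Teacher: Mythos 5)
Your proposal is correct and follows the same skeleton as the paper's proof: the closed form $\tilde{\theta}_k = P_k(\sum_{l=1}^k\phi_l d_l + P_0^{-1}\tilde{\theta}_0)$, a split over the event of Corollary \ref{lemma_b} on which $\Vert P_k\Vert_2 = O(1/k)$, the crude almost-sure bound $\Vert P_k\Vert_2 \leq \Vert P_0\Vert_2$ on the complement, and an absorption of the resulting polynomially growing moment by the $O(1/k^\gamma)$ tail via H\"older. The genuine difference is in how the central moment estimate $\mathbb{E}[\Vert\sum_{l=1}^k\phi_l d_l\Vert_2^{2\gamma}] = O(k^\gamma)$ is obtained: the paper expands the $2\gamma$-th power and counts non-vanishing cross terms combinatorially (mirroring the argument of Theorem \ref{lemma_a}), whereas you invoke the martingale structure and Burkholder's inequality, which is shorter and more standard. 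Two caveats on your version. First, your filtration $\mathcal{F}_{l-1} = \sigma(d_1,\ldots,d_{l-1},\{r_k\},\{e_k\})$ contains the entire sequence $\{e_k\}$, and since the noise $d_l$ enters $y_l$ and is therefore embedded in the $e$-representation of Assumption \ref{ass_a}, $d_l$ is not independent of that $\sigma$-algebra; you should instead take the natural filtration generated by $\{e_j\}_{j\leq l-1}$ and the deterministic $\{r_k\}$, under which $\phi_l$ is still measurable and $d_l$ is still an independent innovation. Second, the $C_r$-inequality applied to $(\sum_l\Vert\phi_l\Vert_2^2 d_l^2)^{\gamma/4}$ only yields $O(k^{\gamma/4})$ when $\gamma/4 \geq 1$; for $\gamma < 4$ use Jensen instead, or simply deduce $\mathbb{E}[\Vert M_k\Vert_2^{\gamma/2}] \leq (\mathbb{E}[\Vert M_k\Vert_2^{2\gamma}])^{1/4}$ by the Lyapunov inequality, which is what the paper does. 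Your single H\"older step with exponents $(4,4/3)$ replaces the paper's two successive Cauchy--Schwarz bootstraps and arrives at the same $O(k^{-\gamma/2})$ bad-event contribution more directly; both are valid.
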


\begin{proof}
Define $L_{k} \triangleq \sum_{l=1}^{k}\phi_{l}d_{l} + P_{0}^{-1}\tilde{\theta}_{0}$.

We first prove that 
\begin{align}\label{Lkl}
\mathbb{E}\left[\left\Vert L_{k} \right\Vert_{2}^{2\gamma} \right] = O(k^{\gamma}).
\end{align}

By the $C_{r}$-inequality \cite{P}, we have 
\[
\left\Vert L_{k} \right\Vert_{2}^{2\gamma}  \leq 2^{2\gamma-1}\left( \left\Vert\sum_{l=1}^{k}\phi_{l}d_{l}  \right\Vert_{2}^{2\gamma} + \left\Vert P_{0}^{-1} \tilde{\theta}_{0} \right\Vert_{2}^{2\gamma}\right).
\]
Since the term $\Vert P_{0}^{-1} \tilde{\theta}_{0} \Vert_{2}^{2\gamma}$ is independent of $k$, we have
\[
\mathbb{E}\left[\left\Vert L_{k} \right\Vert_{2}^{2\gamma} \right] = O\left(\mathbb{E}\left[\left\Vert\sum_{l=1}^{k}\phi_{l}d_{l}  \right\Vert_{2}^{2\gamma}\right]\right).
\]
Now we examine the term $\mathbb{E}[\Vert\sum_{l=1}^{k}\phi_{l}d_{l}\Vert_{2}^{2\gamma}]$. Note that
\begin{align*}
\left\Vert\sum_{l=1}^{k}\phi_{l}d_{l}  \right\Vert_{2}^{2\gamma} = & \sum_{l_{1}=1}^{k}\cdots\sum_{l_{2\gamma}=1}^{k} \prod_{t_{1}=1}^{\gamma} \phi_{l_{2t_{1}-1}}^{T}\phi_{l_{2t_{1}}} \prod_{t_{2}=1}^{2\gamma} d_{l_{t_{2}}}.
\end{align*}
From Assumption \ref{ass_a}, for each index combination $l_{1}, l_{2}, \ldots, l_{2\gamma} \in \{1,2,\ldots,k\}$, by Corollary \ref{cor_b2b} and H\"{o}lder inequality \cite{P}, it holds that
\begin{align*}
&\mathbb{E}\left[ \prod_{t_{1}=1}^{\gamma} \phi_{l_{2t_{1}-1}}^{T}\phi_{l_{2t_{1}}} \prod_{t_{2}=1}^{2\gamma} d_{l_{t_{2}}} \right] \\
= & \mathbb{E}\left[ \prod_{t_{1}=1}^{\gamma} \phi_{l_{2t_{1}-1}}^{T}\phi_{l_{2t_{1}}} \right] \mathbb{E}\left[ \prod_{t_{2}=1}^{2\gamma} d_{l_{t_{2}}} \right] \\
\leq &  \left( \prod_{t_{1}=1}^{2\gamma} \left(\mathbb{E}\left[ \left\Vert \phi_{l_{t_{1}}} \right\Vert_{2}^{2\gamma}\right] \right)^{1/2\gamma}\right) \left( \prod_{t_{2}=1}^{2\gamma} \left(\mathbb{E}\left[ \left\vert d_{l_{t_{2}}} \right\vert^{2\gamma}\right] \right)^{1/2\gamma}\right)  = O(1).
\end{align*}
Therefore, the problem of determining the upper bound order of the term $\mathbb{E}[\Vert\sum_{l=1}^{k}\phi_{l}d_{l}\Vert_{2}^{2\gamma}]$ with respect to \(k\) can be transformed into a combinatorial mathematical problem which calculates the number of non-zero terms in $\mathbb{E}[\prod_{t_{1}=1}^{\gamma} \phi_{l_{2t_{1}-1}}^{T}\phi_{l_{2t_{1}}} \prod_{t_{2}=1}^{2\gamma} d_{l_{t_{2}}}]$  for all possible index combinations \(l_{1}, l_{2}, \ldots, l_{2\gamma} \in \{1,2,\ldots,k\}\).

Similarly with the approach of Theorem \ref{lemma_a}, one can get
\begin{align*}
\mathbb{E}\left[\left\Vert L_{k} \right\Vert_{2}^{2\gamma} \right] = O(k^{\gamma}).
\end{align*}
Then, by the Lyapunov inequality \cite{poznyak2009advanced}, it holds that
\begin{align}\label{uuyy661}
&\mathbb{E}\left[\left\Vert L_{k} \right\Vert_{2}^{\gamma}  \right] \leq \sqrt{\mathbb{E}\left[\left\Vert L_{k}\right\Vert_{2}^{2\gamma}  \right]} = O(k^{\gamma/2}).
\end{align}
By Lemma \ref{lemma_c2233} in Appendix A, one can get
\begin{align}\label{uuyy66}
\tilde{\theta}_{k} = P_{k}L_{k}.
\end{align}
Since $P_{0}$ is a positive definite matrix, by (\ref{PP1}), we have
\begin{align}\label{uuyy662}
\mathbb{E}\left[\left\Vert \tilde{\theta}_{k} \right\Vert_{2}^{2\gamma}  \right] \leq \left\Vert P_{0} \right\Vert_{2}^{2\gamma}\mathbb{E}\left[ \left\Vert L_{k} \right\Vert_{2}^{2\gamma} \right] = O(k^{\gamma}).
\end{align}
Define $\mathrm{\Omega}_{k} \triangleq \{\Vert P_{k} \Vert_{2} > 2\Vert (\bar{\mathbb{E}}[ \phi_{k}\phi_{k}^{T}])^{-1} \Vert_{2}/k\}$.
By Corollary \ref{lemma_b}, one can get
\begin{align}\label{uuyy663}
\mathbb{E}\left[ I_{\{\mathrm{\Omega}_{k}\}}^{2} \right] = \mathbb{P}(\mathrm{\Omega}_{k}) = O\left(\frac{1}{k^{\gamma}}\right).
\end{align}
Then, by (\ref{uuyy661})-(\ref{uuyy663}) and H\"{o}lder inequality \cite{P}, it holds that
\begin{align}\label{ine1}
\mathbb{E}\left[\left\Vert \tilde{\theta}_{k} \right\Vert_{2}^{\gamma}  \right] = & \mathbb{E}\left[\left\Vert \tilde{\theta}_{k}\right\Vert_{2}^{\gamma} I_{\{\Omega_{k}\}} \right] + \mathbb{E}\left[\left\Vert \tilde{\theta}_{k} \right\Vert_{2}^{\gamma} I_{\{\Omega_{k}^{c}\}} \right] \nonumber \\
\leq &  \sqrt{\mathbb{E}\left[\left\Vert \tilde{\theta}_{k} \right\Vert_{2}^{2\gamma} \right]\mathbb{E}\left[ I_{\{\Omega_{k}\}}^{2} \right]}  
 + \mathbb{E}\left[ \frac{2^{\gamma}\Vert (\bar{\mathbb{E}}[ \phi_{k}\phi_{k}^{T}])^{-1} \Vert_{2}^{\gamma} \left\Vert L_{k} \right\Vert_{2}^{\gamma}}{k^{\gamma}} \right] \nonumber \\
= & O(1) + O\left(\frac{1}{k^{\gamma/2}}\right) = O(1).
\end{align}
Repeat the H\"{o}lder inequality \cite{P}, we have
\begin{align}\label{ine2}
\mathbb{E}\left[\left\Vert \tilde{\theta}_{k} \right\Vert_{2}^{\gamma/2}  \right] = & \mathbb{E}\left[\left\Vert \tilde{\theta}_{k}\right\Vert_{2}^{\gamma/2} I_{\{\Omega_{k}\}} \right] + \mathbb{E}\left[\left\Vert \tilde{\theta}_{k} \right\Vert_{2}^{\gamma/2} I_{\{\Omega_{k}^{c}\}} \right] \nonumber \\
\leq &  \sqrt{\mathbb{E}\left[\left\Vert \tilde{\theta}_{k} \right\Vert_{2}^{\gamma} \right]\mathbb{E}\left[ I_{\{\Omega_{k}\}}^{2} \right]} 
 + \mathbb{E}\left[ \frac{2^{\gamma/2}\Vert (\bar{\mathbb{E}}[ \phi_{k}\phi_{k}^{T}])^{-1} \Vert_{2}^{\gamma/2} \left\Vert L_{k} \right\Vert_{2}^{\gamma/2}}{k^{\gamma/2}} \right] \nonumber \\
=& O\left(\frac{1}{k^{\gamma/4}}\right).
\end{align}
\end{proof}


\begin{note}
By establishing a quantitative relationship between the bounded moment conditions of quasi-stationary input/output signals $\gamma$ and the convergence rate of the tail probability of the RLS estimation error $\mathbb{P}(\Omega_{k})$, we more precisely characterize the convergence rate of $\mathbb{E}[\Vert \tilde{\theta}_{k}\Vert_{2}^{\gamma} I_{\{\Omega_{k}\}}]$, thereby replacing the role of uniform boundedness imposed by the projection operator.
\end{note}

\subsection{Asymptotic efficiency of the RLS algorithm for ARX systems}

Finally, the asymptotic efficiency of the RLS algorithm for ARX systems is established as follows:

\begin{thm}\label{lemma_c2} 
Under Assumptions \ref{ass_a}-\ref{ass_c}, 

\romannumeral1) If $\gamma \geq 2$, then the RLS estimate $\hat{\theta}_{k}$ is an asymptotically efficient estimate of $\theta$ in the distribution sense, i.e.,
\begin{align}
\label{lemma_d_1}  \sqrt{k} \tilde{\theta}_{k} \xrightarrow{d}  \mathcal{N}\left(0,  \delta_{d}^{2}\left(\bar{\mathbb{E}}\left[ \phi_{k}\phi_{k}^{T}\right]\right)^{-1} \right),
\end{align}
where $\xrightarrow{d}$ denotes convergence in distribution; 
\begin{align}
\label{theorem_2aaa} \delta_{d}^{2}\left(\bar{\mathbb{E}}\left[ \phi_{k}\phi_{k}^{T}\right]\right)^{-1} = \lim\limits_{k \to \infty}k\sigma_{\mathrm{CR}}\left(k\right);
\end{align}
$\sigma_{\mathrm{CR}}(k) = \delta_{d}^{2}(\sum_{l=1}^{k}\mathbb{E}[\phi_{l}\phi_{l}^{T}])^{-1}$ is the CRLB.

\romannumeral2) If $\gamma > 4$, then the RLS estimate $\hat{\theta}_{k}$ is an asymptotically efficient estimate of $\theta$ in the covariance sense, i.e.,
\begin{align}
\label{theorem_3}\lim\limits_{k \rightarrow \infty} k\left(\mathbb{E}\left[\tilde{\theta}_{k}\tilde{\theta}_{k}^{T} \right]-\sigma_{\mathrm{CR}}\left(k\right) \right)= 0.
\end{align}
\end{thm}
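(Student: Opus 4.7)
The plan is to decompose $\tilde{\theta}_{k} = P_{k}L_{k}$ (by the identity already exploited in Theorem \ref{lemma_c3}, with $L_{k}=\sum_{l=1}^{k}\phi_{l}d_{l}+P_{0}^{-1}\tilde{\theta}_{0}$) and thereby write
\[
\sqrt{k}\,\tilde{\theta}_{k} \;=\; (kP_{k})\cdot\frac{L_{k}}{\sqrt{k}}.
\]
This turns both parts of the theorem into a study of the two factors separately: a matrix factor $kP_{k}$ whose asymptotics are essentially in hand from Corollary \ref{lemma_b}, and a vector factor $L_{k}/\sqrt{k}$ governed by a (conditional) central limit theorem for the martingale $M_{k}\triangleq\sum_{l=1}^{k}\phi_{l}d_{l}$.

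For part (i), I would first use Corollary \ref{lemma_b} together with the continuity of matrix inversion at the positive definite matrix $\bar{\mathbb{E}}[\phi_{k}\phi_{k}^{T}]$ to conclude that $kP_{k}\xrightarrow{P}(\bar{\mathbb{E}}[\phi_{k}\phi_{k}^{T}])^{-1}$. I would then apply a martingale central limit theorem to $M_{k}/\sqrt{k}$: Theorem \ref{lemma_a} applied entry-by-entry yields the quadratic-variation convergence
\[
\frac{1}{k}\langle M\rangle_{k} \;=\; \frac{\delta_{d}^{2}}{k}\sum_{l=1}^{k}\phi_{l}\phi_{l}^{T}\;\xrightarrow{P}\;\delta_{d}^{2}\,\bar{\mathbb{E}}[\phi_{k}\phi_{k}^{T}],
\]
and the conditional Lindeberg condition follows from the bounded eighth-order moments of $\phi_{l}$ (guaranteed for $\gamma\geq 2$ by Corollary \ref{cor_b2b}) together with the independent Gaussian noise $d_{l}$ from Assumption \ref{ass_c}. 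This gives $M_{k}/\sqrt{k}\xrightarrow{d}\mathcal{N}(0,\delta_{d}^{2}\bar{\mathbb{E}}[\phi_{k}\phi_{k}^{T}])$. Since the initial-condition contribution $P_{0}^{-1}\tilde{\theta}_{0}/\sqrt{k}$ is deterministic and vanishes, Slutsky's theorem delivers (\ref{lemma_d_1}). Finally, (\ref{theorem_2aaa}) is immediate from Definition \ref{def_a}: $k\sigma_{\mathrm{CR}}(k)=\delta_{d}^{2}\bigl(k^{-1}\sum_{l=1}^{k}\mathbb{E}[\phi_{l}\phi_{l}^{T}]\bigr)^{-1}\to\delta_{d}^{2}(\bar{\mathbb{E}}[\phi_{k}\phi_{k}^{T}])^{-1}$.

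For part (ii), the plan is to upgrade the weak convergence of $\sqrt{k}\tilde{\theta}_{k}$ from part (i) into convergence of the second moment by a uniform-integrability argument. From Theorem \ref{lemma_c3} we have $\mathbb{E}[\Vert\tilde{\theta}_{k}\Vert_{2}^{\gamma/2}]=O(k^{-\gamma/4})$, equivalently
\[
\sup_{k\geq 1}\mathbb{E}\bigl[\bigl(\sqrt{k}\,\Vert\tilde{\theta}_{k}\Vert_{2}\bigr)^{\gamma/2}\bigr] \;<\; \infty.
\]
When $\gamma>4$ this gives a uniform bound of order strictly larger than $2$ for $\sqrt{k}\,\Vert\tilde{\theta}_{k}\Vert_{2}$, so the family $\{k\Vert\tilde{\theta}_{k}\Vert_{2}^{2}\}_{k\geq 1}$ is uniformly integrable, and this uniform integrability transfers entry-wise to $k\tilde{\theta}_{k}\tilde{\theta}_{k}^{T}$ since $|(\tilde{\theta}_{k}\tilde{\theta}_{k}^{T})_{ij}|\leq\Vert\tilde{\theta}_{k}\Vert_{2}^{2}$. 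Combining uniform integrability with the weak convergence from part (i) yields $k\mathbb{E}[\tilde{\theta}_{k}\tilde{\theta}_{k}^{T}]\to\delta_{d}^{2}(\bar{\mathbb{E}}[\phi_{k}\phi_{k}^{T}])^{-1}$, and subtracting the already-computed limit of $k\sigma_{\mathrm{CR}}(k)$ gives (\ref{theorem_3}).

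I expect the main technical obstacle to be the careful verification of the martingale CLT, in particular specifying the filtration $\{\mathcal{F}_{l}\}$ (which must be fine enough to contain the full history of $\phi_{l}$ through the exogenous noise $\{e_{k}\}$ yet coarse enough that $\{d_{l}\}$ remains a martingale difference sequence) and then checking the conditional Lindeberg condition $\frac{1}{k}\sum_{l=1}^{k}\mathbb{E}\bigl[\Vert\phi_{l}d_{l}\Vert_{2}^{2}I_{\{\Vert\phi_{l}d_{l}\Vert_{2}>\varepsilon\sqrt{k}\}}\mid\mathcal{F}_{l-1}\bigr]\xrightarrow{P}0$, which I would handle by a fourth-moment domination using Markov's inequality, $\mathbb{E}[\Vert\phi_{l}\Vert_{2}^{4}d_{l}^{4}]<\infty$, and Theorem \ref{lemma_a}. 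The rest of the argument is structurally standard once the two factors of the decomposition have been controlled.
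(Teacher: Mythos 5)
Your proposal is correct in substance and shares the paper's overall skeleton — factor $\sqrt{k}\,\tilde{\theta}_{k}$ into $(kP_{k})$ times a normalized noise sum, prove a CLT for the sum, conclude by Slutsky, and then upgrade to covariance convergence via a uniform higher-moment bound — but the CLT step is handled by genuinely different machinery. The paper does not invoke a martingale CLT: it truncates the regressor's moving-average representation at lag $M$, writes $S_{k}=Z_{k}(M)+X_{k}(M)$, applies Ljung's CLT for $M$-dependent doubly indexed arrays (Lemma \ref{lemma_1}) to $Z_{k}(M)$, shows $\sup_{k}\mathbb{E}[\Vert X_{k}(M)\Vert_{2}^{2}]\to 0$ as $M\to\infty$, and passes to the limit via Lemma \ref{lemma_2}; it also reaches the representation $\sqrt{k}\,\tilde{\theta}_{k}=-kP_{k}\sqrt{k}\,\partial J_{k}/\partial\vartheta|_{\vartheta=\theta}$ through a least-squares cost function and the mean value theorem rather than directly from $\tilde{\theta}_{k}=P_{k}L_{k}$, though the two are algebraically equivalent. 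Your martingale route buys a more self-contained argument: predictability of $\phi_{l}$ plus independence of $d_{l}$ from the past gives the martingale-difference structure, Theorem \ref{lemma_a} gives the quadratic-variation convergence, and your fourth-moment domination of the conditional Lindeberg term needs only $\mathbb{E}[\Vert\phi_{l}\Vert_{2}^{4}]<\infty$, so it would in principle work under a weaker moment hypothesis than the paper's $2+\delta$-moment verification for Lemma \ref{lemma_1}; the one point you must nail down (and you flag it yourself) is that the filtration contains the full history of both $\{e_{s}\}$ and $\{d_{s}\}$, since $\phi_{l}$ depends on past $d$'s through the outputs. For part (ii), your uniform-integrability argument is the same mechanism as the paper's appeal to Lemma \ref{lemma_aaa222} (convergence in distribution plus a bounded moment of order $\gamma/2>2$ implies convergence of second moments), with $\gamma>4$ for integer $\gamma$ coinciding with the paper's $\gamma\geq 5$; and your derivation of (\ref{theorem_2aaa}) directly from Definition \ref{def_a} and continuity of inversion matches the paper's use of Lemma \ref{lemma_aaa111}.
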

\begin{proof}
Define $J_{k}(\vartheta) \triangleq \sum_{l=1}^{k}(y_{l}-\phi_{l}^{T}\vartheta)^{2}/(2k) + P_{0}^{-1}\Vert \hat{\theta}_{0} - \vartheta\Vert_{2}^{2}/(2k)$, where $\vartheta \in \mathbb{R}^{m+n}$. Then, we have
\begin{align*}
\frac{\partial J_{k}(\vartheta) }{ \partial \vartheta}  = \frac{1}{k}\left( - \sum_{l=1}^{k}\phi_{l}y_{l}+ \sum_{l=1}^{k}\phi_{l}\phi_{l}^{T}\vartheta - P_{0}^{-1}\hat{\theta}_{0} +  P_{0}^{-1}\vartheta\right), 
\end{align*}
and
\begin{align*}
\frac{\partial^{2} J_{k}(\vartheta) }{ \partial \vartheta^{2}} =  \frac{1}{k}\left(\sum_{l=1}^{k}\phi_{l}\phi_{l}^{T} +  P_{0}^{-1} \right) = \frac{P_{k}^{-1}}{k}> 0.
\end{align*}
By Lemma \ref{lemma_c2233} in Appendix A, $\hat{\theta}_{k}$ can be expressed as
\begin{align}\label{RLS}
& \hat{\theta}_{k} = P_{k}\sum_{l=1}^{k}\phi_{l}y_{l} + P_{k}P_{0}^{-1}\hat{\theta}_{0}^{RLS},
\end{align} 
which indicates that $\partial J_{k}(\vartheta)/ \partial \vartheta  |_{\vartheta = \hat{\theta}_{k}} = 0.$
Then, by the mean value theorem \cite{zorich2016mathematical}, we have
\begin{align*}
\frac{\partial J_{k}(\vartheta) }{ \partial \vartheta} \Big |_{\vartheta = \hat{\theta}_{k}} - \frac{\partial J_{k}(\vartheta) }{ \partial \vartheta} \Big|_{\vartheta = \theta} = \frac{\partial^{2} J_{k}(\vartheta) }{ \partial \vartheta^{2}}\Big|_{\vartheta = \xi_{k}}  \tilde{\theta}_{k},
\end{align*}
where $\xi_{k} = c\hat{\theta}_{k} + (1-c)\theta$, $0 < c < 1$. It follows that
\begin{align}\label{uuuiii333}
\sqrt{k} \tilde{\theta}_{k} = -kP_{k}\sqrt{k}\frac{\partial J_{k}(\vartheta) }{ \partial \vartheta} \Big|_{\vartheta = \theta}.
\end{align}
Define $S_{k} \triangleq \sum_{l=1}^{k}\phi_{l}d_{l}/\sqrt{k}$.
Since $y_{k}-\phi_{k}^{T}\theta = d_{k}$, we have
\begin{align}\label{uuuiii222}
\lim\limits_{k\rightarrow\infty}\left( \sqrt{k}\frac{\partial J_{k}(\vartheta) }{ \partial \vartheta}\Big|_{\vartheta = \theta} - S_{k}\right)= 0.
\end{align}
By Assumption \ref{ass_a}, $\phi_{k}$ can be expressed as
\begin{align*}
\phi_{k} = \sum_{i=1}^{\infty} f_{i}^{(5)}(k)r_{k-i}  + \sum_{i=0}^{\infty} f_{i}^{(6)}(k)e_{k-i},
\end{align*}
where $f_{i}^{(5)}(k) \in \mathbb{R}^{m+n}$ and $f_{i}^{(6)}(k) \in \mathbb{R}^{m+n}$ are the filters.
For a positive integer $M$, we define
\begin{align*}
&\phi_{k}(M) \triangleq \sum_{i=1}^{\infty} f_{i}^{(5)}(k)r_{k-i}  + \sum_{i=0}^{M} f_{i}^{(6)}(k)e_{k-i}, \\
&\tilde{\phi}_{k}(M) \triangleq \phi_{k} - \phi_{k}(M) = \sum_{i=M+1}^{\infty} f_{i}^{(6)}(k)e_{k-i}, \\
&Z_{k}(M) \triangleq \sum_{l=1}^{k}\frac{\phi_{l}(M)d_{l}}{\sqrt{k}}, \\
& X_{k}(M) \triangleq \sum_{l=1}^{k}\frac{\tilde{\phi}_{l}(M)d_{l}}{\sqrt{k}}.
\end{align*}
Then, $S_{k}$ can be expressed as
\begin{align}
\label{SSDD}S_{k} = Z_{k}(M) + X_{k}(M).
\end{align}
Next, we will apply Lemma \ref{lemma_2} in Appendix A to (\ref{SSDD}).
First, we show that $Z_{k}(M)$ is asymptotically normal according to Lemma \ref{lemma_1} in Appendix A.
By Corollary \ref{cor_b2b}, for $\gamma \geq 2$, one can get
\[
\mathbb{E}\left[\left\Vert\phi_{k}\right\Vert_{2}^{8}\right] = O(1).
\]
Then, by the Lyapunov inequality \cite{poznyak2009advanced}, it holds that
\begin{align*}
\mathbb{E}\left[ \left\Vert \frac{\phi_{l}(M)d_{l}}{\sqrt{k}} \right\Vert_{2}^{4} \right]
 \leq 
\frac{\sqrt{\mathbb{E}[\Vert \phi_{l}(M)\Vert_{2}^{8}] \mathbb{E}[ \Vert  d_{l}\Vert_{2}^{8}]}}{k^{2}} = O\left(\frac{1}{k^{2}}\right),
\end{align*}
and 
\begin{align*}
\mathbb{E}\left[ \left\Vert \frac{\phi_{l}(M)d_{l}}{\sqrt{k}} \right\Vert_{2}^{2} \right]
 \leq \sqrt{\mathbb{E}\left[ \left\Vert \frac{\phi_{l}(M)d_{l}}{\sqrt{k}} \right\Vert_{2}^{4} \right]} = O\left(\frac{1}{k}\right).
\end{align*}
Then, by $\mathbb{E}[\phi_{l}(M)d_{l}/\sqrt{k}] = 0$ and Lemma \ref{lemma_1} in Appendix A, one can get
\begin{align*}
Z_{k}(M) \xrightarrow{d}  \mathcal{N}(0,Q_{M}),
\end{align*}
where 
\begin{align*}
Q_{M} & = \lim\limits_{k\rightarrow\infty} \mathbb{E}\left[ Z_{k}(M) (Z_{k}(M))^{T}\right]  =  \delta_{d}^{2}\lim\limits_{k\rightarrow\infty}\frac{1}{k}\sum_{l=1}^{k} \mathbb{E}\left[\phi_{l}(M)\left(\phi_{l}(M)\right)^{T}\right].
\end{align*}
Note that
\begin{align*}
\mathbb{E}\left[\Vert X_{k}(M)\Vert_{2}^{2}\right] = & \frac{\delta_{d}^{2}}{k}\mathbb{E}\left[\sum_{l=1}^{k}\Vert \tilde{\phi}_{l}(M)\Vert_{2}^{2}\right] =  \frac{\delta_{d}^{2}}{k}\sum_{l=1}^{k}\sum_{i=M+1}^{\infty}\delta_{e}^{2} \left\Vert f_{i}^{(6)}(l)\right\Vert_{2}^{2} \\
\leq & \frac{\delta_{d}^{2}\delta_{e}^{2}}{k}\sum_{l=1}^{k}\left(\sum_{i=M+1}^{\infty} \mathop{\sup}\limits_{l}\left\Vert f_{i}^{(6)}(l)\right\Vert_{2}\right)^{2}.
\end{align*}
By $\sum_{i=0}^{\infty} \sup_{k} \Vert f_{i}^{(6)}(k)\Vert_{2} = O(1)$, we have 
\begin{align*}
\lim\limits_{M\rightarrow\infty} \sum_{i=M+1}^{\infty} \mathop{\sup}\limits_{l}\left\Vert f_{i}^{(6)}(l)\right\Vert_{2} = 0.
\end{align*}
Hence,
\begin{align*}
\lim\limits_{M\rightarrow\infty}\mathbb{E}\left[\Vert X_{k}(M)\Vert_{2}^{2}\right] =0.
\end{align*}
Therefore, by Lemma \ref{lemma_2} in Appendix A, one can get
\begin{align*}
S_{k} \xrightarrow{d}  \mathcal{N}\left(0,Q\right),
\end{align*}
where 
\begin{align*}
Q & = \lim\limits_{M\rightarrow\infty} Q_{M} =  \delta_{d}^{2} \lim\limits_{M\rightarrow\infty}\lim\limits_{k\rightarrow\infty}\frac{1}{k}\sum_{l=1}^{k} \mathbb{E}\left[\phi_{l}(M)\left(\phi_{l}(M)\right)^{T}\right].
\end{align*}
Note that 
\begin{align*}
& \lim\limits_{M\rightarrow\infty} \mathbb{E}\left[\phi_{l}(M)\left(\phi_{l}(M)\right)^{T}\right] \\
= &  \lim\limits_{M\rightarrow\infty} \delta_{e}^{2} \sum_{i=1}^{M} f_{i}^{(6)}(l)\left(f_{i}^{(6)}(l)\right)^{T} +  \sum_{i=1}^{\infty} f_{i}^{(5)}(l)\left(f_{i}^{(5)}(l)\right)^{T}r_{k-i}^{2} 
=  \mathbb{E}\left[\phi_{l}\phi_{l}^{T}\right].
\end{align*}
Hence,
\begin{align*}
Q & = \delta_{d}^{2} \lim\limits_{k\rightarrow\infty}\frac{1}{k}\sum_{l=1}^{k} \lim\limits_{M\rightarrow\infty}\mathbb{E}\left[\phi_{l}(M)\left(\phi_{l}(M)\right)^{T}\right] \\
& = \delta_{d}^{2} \bar{\mathbb{E}}\left[\phi_{k}\phi_{k}^{T}\right].
\end{align*}
Thus, we have $S_{k} \xrightarrow{d}  \mathcal{N}(0,\delta_{d}^{2} \bar{\mathbb{E}}[\phi_{k}\phi_{k}^{T}])$.
By (\ref{uuuiii222}), one can get
\begin{align}\label{pyte}
\sqrt{k}\frac{\partial J_{k}(\vartheta) }{ \partial \vartheta}\Big|_{\vartheta = \theta} \xrightarrow{d}  \mathcal{N}\left(0,\delta_{d}^{2} \bar{\mathbb{E}}\left[\phi_{k}\phi_{k}^{T}\right]\right).
\end{align}
By Corollary \ref{lemma_b} and Borel-Cantelli lemma \cite{ash2014real}, it holds that
\[
\left\Vert kP_{k} \right\Vert_{2} = O(1), \quad \text{a.s.}
\]
and
\[
\lim\limits_{k \rightarrow \infty} \left\Vert \bar{\mathbb{E}}\left[ \phi_{k}\phi_{k}^{T}\right] - \frac{P_{k}^{-1}}{k}\right\Vert
=  0, \quad  \mathrm{a.s.}
\]
Then, by
\begin{align*}
\left\Vert kP_{k} - \left(\bar{\mathbb{E}}\left[ \phi_{k}\phi_{k}^{T}\right] \right)^{-1} \right\Vert_{2} 
= &  \left\Vert kP_{k} \left( \bar{\mathbb{E}}\left[ \phi_{k}\phi_{k}^{T}\right] - \frac{P_{k}^{-1}}{k} \right) \left(\bar{\mathbb{E}}\left[ \phi_{k}\phi_{k}^{T}\right] \right)^{-1} \right\Vert_{2} \\
\leq & \left\Vert kP_{k} \right\Vert_{2} \left\Vert  \bar{\mathbb{E}}\left[ \phi_{k}\phi_{k}^{T}\right] - \frac{P_{k}^{-1}}{k} \right\Vert_{2} \left\Vert \left(\bar{\mathbb{E}}\left[ \phi_{k}\phi_{k}^{T}\right]\right)^{-1} \right\Vert_{2},
\end{align*}
we have
\begin{align}\label{P989822222}
\lim\limits_{k \rightarrow \infty} \left\Vert  \left( \bar{\mathbb{E}}\left[ \phi_{k}\phi_{k}^{T}\right]\right)^{-1} - kP_{k} \right\Vert_{2}= 0, \quad  \mathrm{a.s.}
\end{align}
By (\ref{uuuiii333}), (\ref{pyte}) and (\ref{P989822222}), it follows that
\begin{align*}
\sqrt{k} \tilde{\theta}_{k} \xrightarrow{d}  \mathcal{N}\left(0,\delta_{d}^{2}\left(\bar{\mathbb{E}}\left[ \phi_{k}\phi_{k}^{T}\right] \right)^{-1} \right).
\end{align*}
Then, by Lemma \ref{lemma_aaa111} in Appendix A, one can get
\begin{align*}
\lim\limits_{k \rightarrow \infty} k \sigma_{\mathrm{CR}}\left(k\right) = \delta_{d}^{2}\left(\bar{\mathbb{E}}\left[\phi_{k}\phi_{k}\right]\right)^{-1}.
\end{align*}
Besides, by Theorem \ref{lemma_c3}, for $\gamma \geq 5$, we have 
\[
\mathbb{E}\left[ \left\Vert \sqrt{k}\tilde{\theta}_{k} \right\Vert_{2}^{5/2}\right]  = O\left(1\right).
\]
Then, by  Lemma \ref{lemma_aaa222} in Appendix A, it holds that
\begin{align*}
\lim\limits_{k\rightarrow\infty} k \mathbb{E}\left[\tilde{\theta}_{k}\tilde{\theta}_{k}^{T} \right] = \delta_{d}^{2}\left(\bar{\mathbb{E}}\left[ \phi_{k}\phi_{k}^{T}\right] \right)^{-1}.
\end{align*}
\end{proof}
\begin{note}
Since the CRLB represents a benchmark for the effectiveness of estimation processes \cite{friedlander1984computation}, Theorem \ref{lemma_c2} indicates that the RLS algorithm is asymptotically optimal for ARX systems even without projection to ensure that parameter estimates reside within a prior known compact set.
\end{note}

\section{Conclusion}\label{sec_f}
This paper provides an asymptotic efficiency analysis of the RLS algorithm for ARX systems.
By establishing a novel analytical framework, we relax the prior assumption that the system parameters belong to a known compact set, thereby eliminating the need for projection operators.

Here, we give some topics for future research. 
First, how to further relax the excitation conditions of the system input/output.
Additionally, exploring whether the RLS algorithm can achieve asymptotic efficiency in more general system models represents an important open question. 
And whether this framework can be extended to other identification algorithms for asymptotic efficiency analysis.

\appendix

\section*{Useful tools} \label{aap_a}

\setcounter{equation}{0}
\renewcommand{\theequation}{A.\arabic{equation}}
\setcounter{lem}{0}
\renewcommand{\thelem}{A.\arabic{lem}}

\begin{lem}\label{lemma_aaa222} (Theorem 4.5.2 in \cite{chung2000course}) If the random process $\{X_{n}\}$ converges to $X$ in distribution, and for some $p > 0$, $\sup_{n}\mathbb{E}[\Vert X_{n} \Vert_{2}^{p}] < \infty$, then, for each $r < p$,
\[
\lim\limits_{k \to \infty} \mathbb{E}[\Vert X_{n} \Vert_{2}^{r}] = \mathbb{E}[\Vert X \Vert_{2}^{r}] < \infty.
\]
\end{lem}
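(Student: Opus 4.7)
The plan is to reduce the claim to two classical ingredients: (i) uniform integrability of the family $\{\|X_{n}\|_{2}^{r}\}$, and (ii) the standard bridge theorem that convergence in distribution combined with uniform integrability yields convergence of expectations. This is exactly the shape of the Chung reference cited, so the work is in assembling the pieces cleanly.

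First, I would establish uniform integrability of $\{\|X_{n}\|_{2}^{r}\}$ via the Chebyshev-type trick that exploits the strictly higher moment $p>r$. For any $K>0$, on the event $\{\|X_{n}\|_{2}^{r}>K\}$ one has $\|X_{n}\|_{2}>K^{1/r}$, hence $\|X_{n}\|_{2}^{r-p}\le K^{(r-p)/r}$ because the exponent $r-p$ is negative. Writing $\|X_{n}\|_{2}^{r}=\|X_{n}\|_{2}^{p}\cdot\|X_{n}\|_{2}^{r-p}$ and taking expectations gives
\[
\mathbb{E}\!\left[\|X_{n}\|_{2}^{r}\,I_{\{\|X_{n}\|_{2}^{r}>K\}}\right] \le K^{(r-p)/r}\sup_{n}\mathbb{E}\!\left[\|X_{n}\|_{2}^{p}\right],
\]
and the right-hand side tends to $0$ as $K\to\infty$ uniformly in $n$, which is precisely the uniform integrability criterion.

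Second, I would apply the continuous mapping theorem to the continuous map $x\mapsto\|x\|_{2}^{r}$ to deduce $\|X_{n}\|_{2}^{r}\xrightarrow{d}\|X\|_{2}^{r}$. To convert this into convergence of expectations, I would invoke Skorokhod's representation theorem, producing copies $\tilde{X}_{n},\tilde{X}$ on an auxiliary probability space with $\tilde{X}_{n}\to\tilde{X}$ almost surely and hence $\|\tilde{X}_{n}\|_{2}^{r}\to\|\tilde{X}\|_{2}^{r}$ almost surely. Uniform integrability depends only on marginals, so it transfers to the copies, and Vitali's convergence theorem then yields $\mathbb{E}[\|\tilde{X}_{n}\|_{2}^{r}]\to\mathbb{E}[\|\tilde{X}\|_{2}^{r}]$; equality in distribution recovers $\mathbb{E}[\|X_{n}\|_{2}^{r}]\to\mathbb{E}[\|X\|_{2}^{r}]$.

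Finally, finiteness of the limit moment follows from Fatou's lemma on the Skorokhod space together with Jensen's inequality applied to the concave function $t\mapsto t^{r/p}$:
\[
\mathbb{E}\!\left[\|X\|_{2}^{r}\right] \le \liminf_{n\to\infty}\mathbb{E}\!\left[\|X_{n}\|_{2}^{r}\right] \le \sup_{n}\bigl(\mathbb{E}\!\left[\|X_{n}\|_{2}^{p}\right]\bigr)^{r/p}<\infty.
\]
The only real obstacle is verifying the uniform integrability cleanly; once that estimate is in place, the rest is a direct invocation of standard weak-convergence machinery. Since the result is stated as Chung's Theorem 4.5.2, the task is essentially expository rather than technical.
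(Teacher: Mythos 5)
Your proof is correct, and the paper itself offers no proof of this statement: it is quoted verbatim as Theorem 4.5.2 of Chung's textbook, whose proof is exactly the argument you give (uniform integrability of $\{\Vert X_{n}\Vert_{2}^{r}\}$ from the higher moment bound, then the bridge from convergence in distribution plus uniform integrability to convergence of expectations). The only cosmetic point is that your uniform-integrability estimate, with $K^{(r-p)/r}\to 0$, implicitly uses $0<r<p$, which is the intended reading of the statement; with that understood, every step (the continuous mapping theorem, Skorokhod representation, Vitali, and the Fatou--Lyapunov bound for finiteness) is sound and standard.
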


\begin{lem}\label{lemma_aaa333} (Theorem 5.6.18 in \cite{horn2012matrix}) For any matrix $A$,
\[
\max_{A \neq 0}\frac{\Vert A \Vert_{2}}{\Vert A \Vert_{1}} = \max_{A \neq 0}\frac{\Vert A \Vert_{1}}{\Vert A \Vert_{2}} < \infty.
\]
\end{lem}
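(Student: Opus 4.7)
The plan is to establish two claims: (i) both $\sup_{A \neq 0} \Vert A \Vert_2 / \Vert A \Vert_1$ and $\sup_{A \neq 0} \Vert A \Vert_1 / \Vert A \Vert_2$ are finite, and (ii) they coincide. The finiteness is a direct consequence of the equivalence of norms on the finite-dimensional real vector space $\mathbb{R}^{n \times n}$: each ratio is continuous on $\mathbb{R}^{n \times n} \setminus \{0\}$ and invariant under positive scaling, so the supremum can be restricted to the compact unit sphere of one of the two norms, where it is attained by the extreme value theorem. This already delivers the ``$< \infty$'' half of the statement without any numerical computation.

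For the equality of the two maxima, my plan is to compute both explicitly and show each equals $\sqrt{n}$. To prove $\Vert A \Vert_2 \leq \sqrt{n}\,\Vert A \Vert_1$, I would first bound the spectral norm by the Frobenius norm via Cauchy--Schwarz applied row-wise, giving $\Vert A \Vert_2 \leq \Vert A \Vert_F$, and then bound $\Vert A \Vert_F^2 = \sum_j \Vert a_{\cdot j} \Vert_2^2 \leq \sum_j \Vert a_{\cdot j} \Vert_1^2 \leq n\,\Vert A \Vert_1^2$, where $a_{\cdot j}$ is the $j$th column. For the reverse direction $\Vert A \Vert_1 \leq \sqrt{n}\,\Vert A \Vert_2$, I would use $\Vert A \Vert_1 = \max_j \Vert a_{\cdot j} \Vert_1 \leq \sqrt{n}\max_j \Vert a_{\cdot j} \Vert_2 \leq \sqrt{n}\,\Vert A \Vert_2$, combining the elementary vector inequality $\Vert v \Vert_1 \leq \sqrt{n}\,\Vert v\Vert_2$ (Cauchy--Schwarz) with the observation $\Vert a_{\cdot j}\Vert_2 = \Vert A e_j\Vert_2 \leq \Vert A\Vert_2$.

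To verify both constants are sharp, I would exhibit extremal matrices. Taking $A$ to be the $n\times n$ matrix with a single row of ones and all other rows zero gives $\Vert A \Vert_1 = 1$ and $\Vert A \Vert_2 = \sqrt{n}$ (since $A^{T}A$ is the all-ones matrix with largest eigenvalue $n$), saturating the first inequality; its transpose saturates the second. Hence both suprema are attained and equal to the common value $\sqrt{n}$. The only ``obstacle'' here is careful bookkeeping through the chain of elementary inequalities, since the result is a textbook fact (which is why the excerpt attributes it to Horn \& Johnson rather than proving it); no step involves genuinely hard analysis.
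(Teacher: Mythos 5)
Your proposal is correct. Note that the paper does not prove this lemma at all---it is quoted verbatim from Horn and Johnson as a known fact---so there is no in-paper argument to compare against; your write-up is a valid self-contained verification. The two halves of your argument fit together cleanly: the chain $\Vert A \Vert_{2} \leq \Vert A \Vert_{F} \leq \sqrt{n}\,\Vert A \Vert_{1}$ and the chain $\Vert A \Vert_{1} = \max_{j}\Vert A e_{j}\Vert_{1} \leq \sqrt{n}\max_{j}\Vert A e_{j}\Vert_{2} \leq \sqrt{n}\,\Vert A \Vert_{2}$ give both upper bounds, and your extremal pair (a single row of ones, with $\Vert A\Vert_{1}=1$, $\Vert A\Vert_{2}=\sqrt{n}$ since $A^{T}A$ is the all-ones matrix, and its transpose, with $\Vert A\Vert_{1}=n$, $\Vert A\Vert_{2}=\sqrt{n}$) shows both suprema are attained and equal $\sqrt{n}$, which is strictly more information than the lemma asserts. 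Two small remarks: the compactness argument alone already proves attainment and finiteness, so once you exhibit the extremal matrices the extreme-value-theorem preamble is redundant; and for the way the lemma is actually used in Corollary 3.1 (passing from an entrywise bound to a $2$-norm bound on $\bar{\mathbb{E}}[\phi_{k}\phi_{k}^{T}] - \frac{1}{k}\sum_{l}\phi_{l}\phi_{l}^{T}$), only the finiteness of the dimension-dependent constant matters, not its exact value or the equality of the two ratios.
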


\begin{lem}\label{lemma_1} (Lemma 9.A.1 in \cite{ljung1987theory})
Consider the sum of doubly indexed random variables $x_{N}(k)$: $Z_{N} = \sum_{t=1}^{N} x_{N}(t),$
where $\mathbb{E}[x_{N}(t)] = 0.$
Suppose that $\{ x_{N}(1),\ldots,x_{N}(s) \}$ and $\{ x_{N}(t),x_{N}(t+1),\ldots,x_{N}(n) \}$ are independent if $t-s > M$, where $M$ is an integer, that
\[
\mathop{\lim\sup}\limits_{N \to \infty} \sum_{k=1}^{N}\mathbb{E}\left[\Vert x_{N}(k) \Vert_{2}^{2}\right] < \infty
\]
and that
\[
\mathop{\lim\sup}\limits_{N \to \infty} \sum_{k=1}^{N}\mathbb{E}\left[\Vert x_{N}(k) \Vert_{2}^{2+\delta}\right] = 0, \quad \text{some} \,\, \delta > 0.
\]
Let
\[
Q = \lim_{N \to \infty} \mathbb{E}\left[Z_{N}Z_{N}^{T}\right].
\]
Then
\[
Z_{N} \xrightarrow{d}  \mathcal{N}\left(0, Q \right)
\]
\end{lem}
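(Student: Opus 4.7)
The plan is to establish this CLT for an $M$-dependent triangular array via the classical Bernstein blocking reduction to the multivariate Lindeberg--Feller theorem for independent summands. I would partition $\{1,\ldots,N\}$ into alternating \emph{big} blocks $B_j$ of length $p_N$ and \emph{spacer} blocks $S_j$ of fixed length $q = M+1$, yielding $r_N \sim N/p_N$ blocks of each type; set $\eta_j \triangleq \sum_{t\in B_j} x_N(t)$, $\zeta_j \triangleq \sum_{t\in S_j} x_N(t)$, and decompose $Z_N = T_N + R_N$ where $T_N = \sum_j \eta_j$ and $R_N = \sum_j \zeta_j$. Because adjacent big blocks are separated by more than $M$ indices, the $M$-dependence hypothesis makes $\{\eta_j\}_j$ mutually independent, putting $T_N$ under the scope of the classical CLT for independent summands.

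Next, I would control $R_N$ in $L^2$. The hypothesis $\sum_k \mathbb{E}[\|x_N(k)\|_2^{2+\delta}] \to 0$ combined with Jensen's inequality forces $\max_k \mathbb{E}[\|x_N(k)\|_2^2] \to 0$, and Cauchy--Schwarz inside each length-$q$ spacer yields $\mathbb{E}[\|\zeta_j\|_2^2] \le q \sum_{t\in S_j} \mathbb{E}[\|x_N(t)\|_2^2]$. Summing and bounding $\sum_{t \in \cup_j S_j} \mathbb{E}[\|x_N(t)\|_2^2]$ by H\"older's inequality against the vanishing $(2+\delta)$-moment sum shows $\mathbb{E}[\|R_N\|_2^2] \to 0$ for any $p_N \to \infty$. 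Combining this with $\mathrm{Cov}(Z_N) \to Q$ and the identity $\mathrm{Cov}(T_N) = \mathrm{Cov}(Z_N) - \mathrm{Cov}(R_N) - 2\mathrm{Cov}(T_N, R_N)$, together with Cauchy--Schwarz on the cross-term, gives $\mathrm{Cov}(T_N) \to Q$; by Slutsky's theorem the problem reduces to proving $T_N \xrightarrow{d} \mathcal{N}(0,Q)$.

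For the now-independent array $\{\eta_j\}_j$, I would verify Lyapunov's $(2+\delta)$-condition and invoke the multivariate Lindeberg--Feller CLT through the Cram\'er--Wold device. By the $C_r$-inequality, $\|\eta_j\|_2^{2+\delta} \le p_N^{1+\delta} \sum_{t\in B_j} \|x_N(t)\|_2^{2+\delta}$, whence $\sum_j \mathbb{E}[\|\eta_j\|_2^{2+\delta}] \le p_N^{1+\delta} \sum_{t=1}^N \mathbb{E}[\|x_N(t)\|_2^{2+\delta}]$, which tends to zero provided $p_N$ grows slowly enough. Lyapunov's condition then implies Lindeberg's; since the limiting covariance has already been identified as $Q$, Lindeberg--Feller applied to each scalar projection $a^T T_N$ yields $T_N \xrightarrow{d} \mathcal{N}(0,Q)$, and Slutsky then delivers $Z_N \xrightarrow{d} \mathcal{N}(0,Q)$.

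The main obstacle is the block-length calibration: $p_N$ must diverge (so that spacers become a vanishing fraction of $\{1,\ldots,N\}$ and $\mathbb{E}[\|R_N\|_2^2] \to 0$) yet grow slowly enough that $p_N^{1+\delta}\sum_{t=1}^N \mathbb{E}[\|x_N(t)\|_2^{2+\delta}] \to 0$, a constraint that is only qualitative because no rate of decay of the $(2+\delta)$-moment sum is assumed. A diagonal-extraction argument produces an admissible sequence $p_N \to \infty$ satisfying both requirements simultaneously; with $q = M+1$ fixed throughout, the remaining pieces (independence of block sums, covariance bookkeeping, and verification of the Lyapunov bound) are standard and mechanical.
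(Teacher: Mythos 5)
The paper does not prove this statement: it is imported verbatim as Lemma 9.A.1 of \cite{ljung1987theory} and used as a black box, so there is no in-paper argument to compare against. Judged on its own merits, your Bernstein-blocking reduction to Lindeberg--Feller via Cram\'er--Wold is the standard and correct architecture for an $M$-dependent triangular-array CLT, and the Lyapunov verification for the big-block sums ($\sum_j \mathbb{E}\Vert\eta_j\Vert_2^{2+\delta}\leq p_N^{1+\delta}\sum_t\mathbb{E}\Vert x_N(t)\Vert_2^{2+\delta}$) is fine.

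The genuine gap is in the $L^2$ control of the spacer remainder $R_N$. Your bound reduces to showing $\sum_{t\in\cup_j S_j}\mathbb{E}\Vert x_N(t)\Vert_2^2\to 0$, and H\"older against the vanishing $(2+\delta)$-moment sum gives only
\[
\sum_{t\in\cup_j S_j}\mathbb{E}\left[\Vert x_N(t)\Vert_2^{2}\right]\;\leq\;\left\vert \cup_j S_j\right\vert^{\delta/(2+\delta)}\left(\sum_{t=1}^{N}\mathbb{E}\left[\Vert x_N(t)\Vert_2^{2+\delta}\right]\right)^{2/(2+\delta)},
\]
with $\vert\cup_j S_j\vert\asymp N/p_N$. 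Writing $\epsilon_N\triangleq\sum_t\mathbb{E}\Vert x_N(t)\Vert_2^{2+\delta}$, the spacer bound wants $p_N\gtrsim N\,\epsilon_N^{2/(2+\delta)}$ (up to the exponent bookkeeping) while Lyapunov wants $p_N^{1+\delta}\epsilon_N\to 0$; since no decay rate for $\epsilon_N$ is assumed (e.g.\ $\epsilon_N=1/\log N$ is admissible), the feasible window for $p_N$ can be empty, so the claimed diagonal extraction does not exist in general. The hypotheses also do not prevent essentially all of the (merely bounded, not vanishing) second-moment mass $\sum_t\mathbb{E}\Vert x_N(t)\Vert_2^2$ from concentrating on the spacer positions of whatever blocking you fix. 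The standard repair is one of two devices you omit: either a pigeonhole/offset-averaging step (among the $p_N+q$ translates of the blocking pattern, at least one puts spacer second-moment mass at most $\tfrac{q}{p_N+q}\sum_{t=1}^N\mathbb{E}\Vert x_N(t)\Vert_2^2$), or blocks of \emph{fixed} length $\gamma$ with the double limit $N\to\infty$ then $\gamma\to\infty$ --- which is precisely what Lemma \ref{lemma_2} in this same appendix is designed for. A secondary, minor point: the stated hypothesis is a single past/future independence split, and mutual independence of all $\rho$ big-block sums (not just consecutive pairs) should be noted explicitly before invoking the independent-summand CLT.
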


\begin{lem}\label{lemma_2} (Lemma 9.A.2 in \cite{ljung1987theory})
Let 
\[
S_{N} = Z_{M}(N) + X_{M}(N), \quad M,N = 1,2,\ldots
\]
such that
\begin{align*}
& \mathbb{E}\left[ X_{M}^{2}(N) \right] \leq C_{M}, \\
& \lim\limits_{M \to \infty} C_{M} = 0, \\
& \mathbb{P}\left( Z_{M}(N) \leq z \right) = F_{M,N}(z), \\
& \lim\limits_{N \to \infty} F_{M,N}(z) = F_{M}(z), \\
& \lim\limits_{M \to \infty} F_{M}(z) = F(z).
\end{align*}
Then,
\[
 \lim\limits_{N \to \infty}   \mathbb{P}\left(S_{N} \leq z \right) = F(z).
\]
\end{lem}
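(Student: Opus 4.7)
The plan is to prove this as a standard Slutsky-type ``double-limit'' argument. Since $X_{M}(N)$ is negligible in probability as $M\to\infty$ (via Chebyshev applied to $\mathbb{E}[X_{M}^{2}(N)]\leq C_{M}$ with $C_{M}\to 0$), and $Z_{M}(N)$ already has the right limiting distribution after the two successive limits $N\to\infty$ and then $M\to\infty$, I will pass these limits through the CDF of $S_{N}=Z_{M}(N)+X_{M}(N)$ in the prescribed order $N\to\infty$, $M\to\infty$, $\varepsilon\downarrow 0$, where $\varepsilon>0$ is an auxiliary buffer introduced to absorb the perturbation $X_{M}(N)$.

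First, I would fix a continuity point $z$ of the limiting CDF $F$ and any $\varepsilon>0$ whose ladder points $z\pm\varepsilon$ are continuity points of every $F_{M}$ and of $F$; such $\varepsilon$ form a dense subset of $(0,\infty)$ because each CDF has at most countably many discontinuities. The elementary set inclusion
\begin{align*}
\{S_{N}\leq z\}\subseteq \{Z_{M}(N)\leq z+\varepsilon\}\cup\{|X_{M}(N)|>\varepsilon\},
\end{align*}
together with its symmetric counterpart $\{Z_{M}(N)\leq z-\varepsilon\}\subseteq\{S_{N}\leq z\}\cup\{|X_{M}(N)|>\varepsilon\}$, yields, after invoking Chebyshev's inequality to bound the error term by $C_{M}/\varepsilon^{2}$, the two-sided sandwich
\begin{align*}
F_{M,N}(z-\varepsilon)-\frac{C_{M}}{\varepsilon^{2}}\;\leq\;\mathbb{P}(S_{N}\leq z)\;\leq\; F_{M,N}(z+\varepsilon)+\frac{C_{M}}{\varepsilon^{2}}.
\end{align*}

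Next, I would push the three parameters to their limits successively. Sending $N\to\infty$ with $M$ and $\varepsilon$ fixed uses the hypothesis $F_{M,N}(z\pm\varepsilon)\to F_{M}(z\pm\varepsilon)$ and gives the bracket $[F_{M}(z-\varepsilon)-C_{M}/\varepsilon^{2},\,F_{M}(z+\varepsilon)+C_{M}/\varepsilon^{2}]$ for $\liminf_{N}\mathbb{P}(S_{N}\leq z)$ and $\limsup_{N}\mathbb{P}(S_{N}\leq z)$. Sending $M\to\infty$ next uses $F_{M}\to F$ at continuity points together with $C_{M}\to 0$, shrinking the bracket to $[F(z-\varepsilon),\,F(z+\varepsilon)]$. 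Finally, letting $\varepsilon\downarrow 0$ along continuity points of $F$ and invoking continuity of $F$ at $z$ pinches both extremes to $F(z)$, which delivers the claim.

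The argument is essentially folklore, so there is no deep obstacle; the only care needed is \emph{bookkeeping}, namely ensuring at each stage that the ladder point $z\pm\varepsilon$ lies at a continuity point of whichever CDF is currently being invoked, and stating explicitly at the outset that the conclusion is to be read in the usual weak-convergence sense (the identity $\lim_{N\to\infty}\mathbb{P}(S_{N}\leq z)=F(z)$ is asserted at continuity points of $F$). Everything else reduces to Chebyshev's inequality and elementary manipulations of distribution functions.
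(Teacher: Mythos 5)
The paper does not prove this lemma at all: it is imported verbatim from Ljung (Lemma 9.A.2) as an appendix tool, so there is no in-paper proof to compare against. Your sandwich argument --- the set inclusions $\{S_{N}\leq z\}\subseteq\{Z_{M}(N)\leq z+\varepsilon\}\cup\{|X_{M}(N)|>\varepsilon\}$ and its counterpart, Chebyshev to bound $\mathbb{P}(|X_{M}(N)|>\varepsilon)$ by $C_{M}/\varepsilon^{2}$, then the limits in the order $N\to\infty$, $M\to\infty$, $\varepsilon\downarrow 0$ along continuity points --- is correct, complete, and is in fact the standard proof of this ``converging together'' lemma; your explicit caveat that the conclusion holds at continuity points of $F$ (and your care in choosing $\varepsilon$ so that $z\pm\varepsilon$ avoids the countably many discontinuities of the $F_{M}$ and $F$) is a precision the paper's bare statement omits.
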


\begin{lem}\label{lemma_aaa111} (Eq. (7.91) in \cite{ljung1987theory}) Under Assumptions \ref{ass_a}-\ref{ass_c}, the CRLB is $\sigma_{\mathrm{CR}}(k) =  \delta_{d}^{2}(\sum_{l=1}^{k}\mathbb{E}[\psi_{l}(\theta)\psi_{l}(\theta)^{T}])^{-1}$, where $\psi_{l}(\theta) = \partial (y_{k} - \phi_{k}^{T}\vartheta)/ \partial \vartheta |_{\vartheta = \theta}$.
\end{lem}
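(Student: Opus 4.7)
The plan is to derive this CRLB from the standard Fisher-information computation applied to the prediction-error decomposition of the data likelihood under Assumptions \ref{ass_a}--\ref{ass_c}. The key structural observation is that, by the definition $\phi_l = [-y_{l-1},\ldots,-y_{l-m},u_{l-1},\ldots,u_{l-n}]^T$, the regressor $\phi_l$ is measurable with respect to the filtration $\mathcal{F}_{l-1}$ generated by $\{u_i\}_{i\le l-1}\cup\{y_i\}_{i\le l-1}$, while the Gaussian noise $d_l$ is independent of $\mathcal{F}_{l-1}$ with law $\mathcal{N}(0,\delta_d^2)$. The chain rule for conditional densities therefore yields $p(y_1,\ldots,y_k;\theta) = \prod_{l=1}^k p(y_l\mid\mathcal{F}_{l-1};\theta)$, with each factor equal to the $\mathcal{N}(\phi_l^T\theta,\delta_d^2)$ density evaluated at $y_l$, so the log-likelihood reduces to a sum of squared one-step prediction errors.

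Next I would compute the score by differentiating the log of each conditional factor with respect to $\theta$, treating the conditioning event (and hence $\phi_l$) as fixed. This gives $\partial\log p(y_l\mid\mathcal{F}_{l-1};\theta)/\partial\theta = -\psi_l(\theta)(y_l-\phi_l^T\theta)/\delta_d^2 = \phi_l d_l/\delta_d^2$, which is a martingale difference with respect to $\mathcal{F}_l$. Summing over $l$ and taking the covariance of the total score, all cross-terms drop out by the orthogonality $\mathbb{E}[\phi_l d_l d_j \phi_j^T] = 0$ for $l\ne j$ (via the tower property and $\mathbb{E}[d_j\mid\mathcal{F}_{j-1}]=0$), leaving the diagonal sum $I_k(\theta) = \delta_d^{-2}\sum_{l=1}^k \mathbb{E}[\phi_l\phi_l^T]$. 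Since $\psi_l(\theta) = -\phi_l$ yields $\psi_l(\theta)\psi_l(\theta)^T = \phi_l\phi_l^T$, this equals $\delta_d^{-2}\sum_{l=1}^k \mathbb{E}[\psi_l(\theta)\psi_l(\theta)^T]$. The multivariate Cramér--Rao inequality then produces $\sigma_{\mathrm{CR}}(k) = I_k(\theta)^{-1} = \delta_d^2\bigl(\sum_{l=1}^k \mathbb{E}[\psi_l(\theta)\psi_l(\theta)^T]\bigr)^{-1}$, which is the stated formula.

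The main subtle point, and the place demanding the most care, is that $\phi_l$ is built from past $y$'s whose joint distribution itself depends on $\theta$, so that a naive differentiation of the unconditional joint density with respect to $\theta$ would also pick up a contribution from $\partial\phi_l/\partial\theta$. The resolution is precisely the prediction-error (conditional) viewpoint adopted above: within each factor $p(y_l\mid\mathcal{F}_{l-1};\theta)$, the regressor is frozen by the conditioning event, so only the mean parameter $\phi_l^T\theta$ carries a $\theta$-dependence, and the information contributions from successive factors are additive by the martingale-difference structure. Making this rigorous requires Assumption \ref{ass_b} (stability of $A(z)$) to ensure the joint density is well defined and sufficiently regular to justify interchanging differentiation and integration, and Assumption \ref{ass_a} (bounded moments) to ensure $\mathbb{E}[\phi_l\phi_l^T]$ exists, so that the classical information identity $I(\theta) = \mathbb{E}[\nabla\log p(\nabla\log p)^T]$ applies. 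This is exactly the derivation carried out in Chapter 7 of \cite{ljung1987theory}, from which the expression is cited.
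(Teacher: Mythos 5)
Your derivation is correct and is precisely the standard prediction-error/Fisher-information computation that underlies Eq.~(7.91) of \cite{ljung1987theory}; the paper itself gives no proof of this lemma (it is stated in the appendix purely as a cited tool), so there is nothing to contrast with. The only quibble is that your worry about a ``$\partial\phi_l/\partial\theta$'' contribution is moot --- the likelihood is differentiated in $\theta$ with the data held fixed, so $\phi_l$, being a function of the data alone, carries no $\theta$-dependence --- but this does not affect the validity of your computation.
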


\begin{lem}\label{lemma_c2233} (Eq. (11.19) in \cite{ljung1987theory})
The RLS estimate can be expressed as $\hat{\theta}_{k} = P_{k}\sum_{l=1}^{k}\phi_{l}y_{l} + P_{k}P_{0}^{-1}\hat{\theta}_{0}.$
\end{lem}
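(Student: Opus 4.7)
The plan is to establish the identity $\hat{\theta}_{k} = P_{k}\sum_{l=1}^{k}\phi_{l}y_{l} + P_{k}P_{0}^{-1}\hat{\theta}_{0}$ by induction on $k$, using the Woodbury-type algebraic relations that connect $P_k$, $P_{k-1}$, and $a_k$. Before starting the induction I would first establish the auxiliary identity $P_{k}^{-1} = P_{k-1}^{-1} + \phi_{k}\phi_{k}^{T}$ directly from the RLS recursion (this is exactly what underlies equation~(\ref{PP1}) of the paper), so that iterating gives $P_{k}^{-1} = P_{0}^{-1} + \sum_{l=1}^{k}\phi_{l}\phi_{l}^{T}$. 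The Woodbury identity applied to this then yields the equivalent update $P_{k} = P_{k-1} - a_{k}P_{k-1}\phi_{k}\phi_{k}^{T}P_{k-1}$, from which I can extract the two crucial algebraic simplifications:
\begin{align*}
P_{k}\phi_{k} &= a_{k}P_{k-1}\phi_{k}, \\
P_{k}P_{k-1}^{-1} &= I - a_{k}P_{k-1}\phi_{k}\phi_{k}^{T}.
\end{align*}
The first follows from $P_k\phi_k = P_{k-1}\phi_k(1-a_k\phi_k^T P_{k-1}\phi_k)$ together with the definition $a_k = (1+\phi_k^T P_{k-1}\phi_k)^{-1}$; the second is immediate from the Woodbury form.

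Next I would run the induction. The base case $k=0$ is vacuous: the sum is empty and $P_{0}P_{0}^{-1}\hat{\theta}_{0} = \hat{\theta}_{0}$. For the inductive step, assume the identity holds at $k-1$, i.e., $\hat{\theta}_{k-1} = P_{k-1}\sum_{l=1}^{k-1}\phi_{l}y_{l} + P_{k-1}P_{0}^{-1}\hat{\theta}_{0}$. I would then rewrite the recursion~(\ref{LS}) as
\begin{align*}
\hat{\theta}_{k} = (I - a_{k}P_{k-1}\phi_{k}\phi_{k}^{T})\hat{\theta}_{k-1} + a_{k}P_{k-1}\phi_{k}y_{k},
\end{align*}
and substitute the two algebraic identities above to obtain
\begin{align*}
\hat{\theta}_{k} = P_{k}P_{k-1}^{-1}\hat{\theta}_{k-1} + P_{k}\phi_{k}y_{k}.
\end{align*}
Plugging the inductive hypothesis into the first term, the factor $P_{k}P_{k-1}^{-1}$ cancels the outer $P_{k-1}$ in each summand, giving $P_{k}\sum_{l=1}^{k-1}\phi_{l}y_{l} + P_{k}P_{0}^{-1}\hat{\theta}_{0}$, and combining with the second term $P_{k}\phi_{k}y_{k}$ closes the induction.

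As an independent cross-check (and a cleaner route one could take in lieu of induction), one can observe that the identity is precisely the normal equation $P_{k}^{-1}\hat{\theta}_{k} = \sum_{l=1}^{k}\phi_{l}y_{l} + P_{0}^{-1}\hat{\theta}_{0}$ for the regularized least-squares cost $J_{k}(\vartheta)$ defined in the proof of Theorem~\ref{lemma_c2}; since $\partial^{2}J_{k}/\partial\vartheta^{2} = P_{k}^{-1}/k \succ 0$, the minimizer is unique and satisfies this equation, and the induction above is really just showing that the RLS recursion reproduces this minimizer at every step.

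The main obstacle is essentially bookkeeping rather than conceptual: one must be careful with the form of the covariance update, since the recursion as displayed in~(\ref{tPPPP}) appears to be missing the leading $P_{k-1}$ term (the correct Woodbury form has $P_{k} = P_{k-1} - a_{k}P_{k-1}\phi_{k}\phi_{k}^{T}P_{k-1}$). Deriving the correct form directly from $P_{k}^{-1} = P_{k-1}^{-1} + \phi_{k}\phi_{k}^{T}$ via the matrix inversion lemma sidesteps this ambiguity and gives the two algebraic identities the induction relies on.
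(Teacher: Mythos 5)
Your proof is correct. Note, however, that the paper does not actually prove this lemma at all --- it is imported verbatim as Eq.~(11.19) of \cite{ljung1987theory} --- so there is no internal argument to compare against; what you have done is supply the self-contained derivation that the paper delegates to the reference. Your induction is sound: the identity $P_{k}^{-1}=P_{k-1}^{-1}+\phi_{k}\phi_{k}^{T}$, the simplifications $P_{k}\phi_{k}=a_{k}P_{k-1}\phi_{k}$ and $P_{k}P_{k-1}^{-1}=I-a_{k}P_{k-1}\phi_{k}\phi_{k}^{T}$, and the rewriting of (\ref{LS}) as $\hat{\theta}_{k}=P_{k}P_{k-1}^{-1}\hat{\theta}_{k-1}+P_{k}\phi_{k}y_{k}$ all check out, and the base case and telescoping are handled properly. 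You are also right to flag that (\ref{tPPPP}) as printed is missing the leading $P_{k-1}$ term; the corrected form $P_{k}=P_{k-1}-a_{k}P_{k-1}\phi_{k}\phi_{k}^{T}P_{k-1}$ is the only one consistent with (\ref{PP1}), which the paper itself relies on, so your decision to derive the update from $P_{k}^{-1}=P_{k-1}^{-1}+\phi_{k}\phi_{k}^{T}$ rather than from the displayed recursion is the right call. One small caveat on your ``cross-check'': the normal-equation route identifies the closed-form expression as the unique minimizer of $J_{k}$, but by itself it does not show that the recursively computed $\hat{\theta}_{k}$ coincides with that minimizer --- the induction is still needed for that, as you yourself acknowledge; indeed the paper's proof of Theorem \ref{lemma_c2} uses this lemma to conclude $\partial J_{k}(\vartheta)/\partial\vartheta\,|_{\vartheta=\hat{\theta}_{k}}=0$, not the reverse.
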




\end{document}